\def\pair<#1>{{\langle\!\langle}#1{\rangle\!\rangle}}
\newcommand\src{{\operatorname{src}}}
\newcommand\img{{\operatorname{im}}}
\newcommand\GL{{\mathsf{GL}}}
\newcommand\PGL{{\mathsf{PGL}}}
\newtheorem{mainthm}{Theorem}
\renewcommand{\themainthm}{\Alph{mainthm}}
\begin{document}
\title{Representation zeta functions of self-similar branched groups}
\author{Laurent Bartholdi}
%started 19 March 2012
%submitted 18 March 2013
%revised 18 May 2014
\date{14 April 2015}
\address{\'Ecole Normale Sup\'erieure, Paris, France\textit{\and} Georg-August Universit\"at zu G\"ottingen, G\"ottingen, Germany}
\email{laurent.bartholdi@gmail.com}
\thanks{The author is supported by DFG
  research grants BA4197/x and ANR ``@raction'' grant ANR-14-ACHN-0018-01. Part of the research was done at the
  Mittag-Leffler Institute, Stockholm and the University of Chicago, whom I thank for their hospitality.}

\subjclass{\textbf{20C15} (Ordinary representations and characters),
  \textbf{11M41} (Other Dirichlet series and zeta functions),
  \textbf{20E22} (Extensions, wreath products, and other
  compositions), \textbf{20E08} (Groups acting on trees),
  \textbf{20F65} (Geometric group theory)}
\begin{abstract}
  We compute the numbers of irreducible linear representations of
  self-similar branched groups, by expressing these numbers as the
  co\"efficients $r_n$ of a Dirichlet series $\sum r_n n^{-s}$.

  We show that this Dirichlet series has a positive abscissa of
  convergence and satisfies a functional equation thanks to which it
  can be analytically continued (through root singularities) to the
  left half-plane.

  We compute the abscissa of convergence and the functional equation
  for some prominent examples of branched groups, such as the Grigorchuk
  and Gupta-Sidki groups.
\end{abstract}
\maketitle

%%%%%%%%%%%%%%%%%%%%%%%%%%%%%%%%%%%%%%%%%%%%%%%%%%%%%%%%%%%%%%%%
\section{Introduction}
Let $G$ be a group, and let $\widehat G$ denote its set of equivalence
classes of irreducible, finite-dimensional complex linear
representations; assume that there are finitely many such
representations in each degree ($G$ is then called \emph{rigid}). The
\emph{representation zeta function} of $G$ is the Dirichlet series
with integer co\"efficients
\[\zeta_G(s)=\sum_{\rho\in\widehat G}(\deg\rho)^{-s}=\sum_{n\ge1} r_n n^{-s},\]
with $r_n$ denoting the degree-$n$ representations in $\widehat G$,
that is, irreducible representations of $G$ in $\GL_n(\C)$. If the
numbers $r_n$ grow polynomially, then analytic properties of $\zeta_G$
yield asymptotic information on $r_n$ and conversely. For example, let
$\sigma_0(G)$ denote the abscissa of convergence of $\zeta_G$; then,
assuming $\sum r_n=\infty$,
\begin{equation}\label{eq:abscissa}
  \sigma_0(G)=\limsup_{n\to\infty}\frac{\log\sum_{j=1}^n r_j}{\log n},
\end{equation}
so the partial sums $\sum_{j=1}^n r_j$ grow approximately as
$n^{\sigma_0}$.  More precisely, the Landau-Phragm\'en theorem implies
that $\zeta_G(s)$ has a singularity at $\sigma_0$, and if a limiting
behaviour $\zeta_G(s)=(s-\sigma_0)^e g(s)+h(s)$ is known with $g,h$
holomorphic in $\{\Re(s)\ge\sigma_0\}$ and $e\in\R\setminus\N$, then
\[\sum_{j=1}^n r_j\approx\frac{g(\sigma_0)}{\sigma_0\Gamma(-e)}n^{\sigma_0}(\log n)^{-e-1},\]
see~\cite{tenenbaum:apnt}*{Theorem~15, page~243} and~\cite{dress:oscillation}.

Note that it is easy to deduce the number of linear representations of
given degree out of the number of irreducible ones, and vice versa;
indeed every linear representation decomposes into a direct sum of
irreducibles whose multiplicities are uniquely determined. Letting
$R_n$ denote the number of representations of degree $n$, we have the
Euler-product formula
\[\sum_{n\ge0} R_n t^n = \prod_{n\ge1}\left(\frac1{1-t^n}\right)^{r_n}.\]

\subsection{Self-similar branched groups}
Representation zeta functions have been extensively investigated for
linear groups (see~\S\ref{ss:intro:linear} for a quick summary); in
this article, we focus on \emph{self-similar branched groups}. They
are certain kinds of groups $G$ equipped with an injective
homomorphism $\psi\colon G\to G^d\rtimes\sym d$, and possessing a
finite-index subgroup $K$ such that $\psi(K)$ contains $K^d$; see
Definitions~\ref{def:ssg}, \ref{def:essg} and~\ref{def:branch} for the
exact definitions.

Thus in particular $G$ and $G^d$ have isomorphic finite-index
subgroups. The integer $d>1$ is called the \emph{degree} of the
branched group, and one says that $G$ is branched \emph{over}
$K$. Iterating the map $\psi$ on its components, one obtains for every
branched group an action by permutation on the set $X^*$ of words over
an alphabet $X$ of cardinality $d$.

Self-similar branched groups constitute a well-studied class of
groups, containing such prominent examples as Grigorchuk's torsion
group of intermediate word growth~\cite{grigorchuk:growth} and
Gupta-Sidki's examples~\cite{gupta-s:burnside}. Their topological
closures in $\aut(X^*)$ may be thought of as analogues of algebraic
groups, defined by equations over infinitely many variables indexed by
$X^*$, see~\cite{siegenthaler:phd}.

On the one hand, branched groups have many finite quotients coming
from the action of the group on $X^n$ for all $n\in\N$; so have many
finite, and in particular linear, representations. On the other hand,
they contain abelian subgroups of arbitrarily large rank and large
normalizer, so they are quite different from linear groups.

In this article, I show that the zeta function of a self-similar
branched group admits quite remarkable properties:
\begin{mainthm}\label{thm:main}
  Let $G$ be a self-similar group of degree $d>1$, branched over its
  subgroup $K$. Then $G$ is rigid if and only if $K/[K,K]$ is
  finite. In that case, its representation zeta function $\zeta_G$
  \begin{enumerate}
  \item has a positive, finite abscissa of convergence $\sigma_0$, so
    that the co\"efficients $r_n$ grow polynomially;
  \item is a linear combination of the solutions $\zeta_i(s)$ of a system of
    functional equations of the form
    \begin{equation}\label{thm:main:fe0}
      \begin{cases}
        \begin{array}[b]{r@{}l}F_i( &\zeta_1(s),\zeta_1(2s),\dots,\zeta_1(ds),\\
          &\zeta_2(s),\dotfill,\zeta_2(ds),\\
          &\dotfill\\
          &\zeta_N(s),\dotfill,\zeta_N(ds)
        \end{array}\kern-0.5em)=\zeta_i(s),\qquad i=1,\dots,N
      \end{cases}
    \end{equation}
    for some $N,P\in\N$ and some Dirichlet polynomials
    $F_1,\dots,F_N\in\Q[z_{1,1},\dots,z_{N,d},2^{-s},\dots,P^{-s}]$;
    furthermore, if $z_{j,k}$ have degree $k$, then the polynomials
    $F_i$ are homogeneous of degree $d$;
  \item can be continued to a bounded, multivalued analytic function
    on the half-plane $\Re(s)>0$, with only root singularities;
  \item has a Puiseux series expansion at $\sigma_0$ of the form
    \[\zeta_G(s)=\sum_{n=0}^\infty a_n (s-\sigma_0)^{n/e},\]
    for some integer $e\le d$.
  \end{enumerate}
\end{mainthm}

The functional equation can be determined algorithmically out of the
description of $G$ as a self-similar branched group, and has been
implemented in \textsc{Gap} code; it is part of the author's
package~\textsc{Fr} freely available on Internet. This code was used
to compute the various examples in~\S\ref{ss:illustrations}.

Note that in general the functional equation~\eqref{thm:main:fe0} is
not sufficient to determine $\zeta_G$. However, under a judicious
choice of finite data extracted from $G$, it determines $\zeta_G$ and
permits a very efficient calculation of its co\"efficients.

It is easy to generalize Theorem~\ref{thm:main} to more general
character series. Let us say that an element $g\in G$ of a
self-similar group is \emph{finite-state} if there exists a finite
subset $W\subseteq G$, containing $g$, such that $\psi(W)\subseteq
W^d\times\sym d$. In words, the element $g$ is defined by a finite set of
recursive rules via the map $\psi$. For any $g\in G$, write its Lambda
series
\[\Lambda(g,s)=\sum_{\rho\in\widehat G}\tr\rho(g)\,\deg(\rho)^{-s}.\]
In particular, $\Lambda(1,s)=\zeta_G(s-1)$. The proof of
Theorem~\ref{thm:main} actually gives:
\def\themainthm{\ref{thm:main}'}
\begin{mainthm}
  Let $G$ be a self-similar branched group of degree $d>1$, and let
  $g\in G$ be finite-state. Then all properties of $\zeta_G$ claimed
  in Theorem~\ref{thm:main} also hold for $\Lambda(g,s)$.
\end{mainthm}
In particular, the variables in the functional equation shall be of
the form $\Lambda_i(w,s)$ for all $w\in W$ and $i=1,\dots,N$, and the
co\"efficients in the functional equation will belong to the field
generated by the character values of $G$. I omit details.

Theorem~\ref{thm:main} extends the main results
of~\cite{bartholdi-h:zetawreath}, in which the group $G$ was assumed
to be isomorphic to $G\wr_X Q$. Here and below the \emph{wreath
  product} $G\wr_X Q$ of the group $G$ with the group $Q$, along the
$Q$-set $X$, is by definition $G^X\rtimes Q$, and \textbf{\color{red}
  we} write $G\wr Q$ if $X=Q$ with its regular $Q$-action.  I will
make liberal use of results from~\cite{bartholdi-h:zetawreath}.

\subsection{Historical background}\label{ss:intro:linear}
If the group $G$ is a topological or algebraic group, then it is
natural to restrict to continuous, respectively rational
representations. Since these behave usually much better, part of the
art is to relate the representation zeta function of a topological
(e.g. Lie) group to that of its lattices.

It seems that the first occurrence of representation zeta functions is
in~\cite{witten:quantumgauge}, which relates $\zeta_G(2g-2)$ to the
moduli space of flat connections of $G$-principal bundles over
$\Sigma_g$, for $G$ a compact, simple, simply connected Lie group and
$\Sigma_g$ an orientable surface of genus $g\ge2$. However, $\zeta_G$
was already implicitly considered earlier; for example, it follows
from Weyl's theory that, if $\ell$ be $G$'s rank and $\kappa$ be the
number of positive roots of $G$'s Lie algebra over $\C$, then there
exists a polynomial $P$ of degree $\kappa$ in $\ell$ variables such
that
\[\zeta_G(s)=\sum_{n_1\ge0,\dots,n_\ell\ge0}P(n_1,\dots,n_\ell)^{-s}.\]
It follows that the abscissa of convergence of $\zeta_G$ is
$\ell/\kappa$, and that $\zeta_G$ extends to a meromorphic function on
the whole
plane; see~\cite{larsen-lubotzky:representationlinear}*{Theorem~5.1}.

Larsen and Lubotzky consider
in~\cite{larsen-lubotzky:representationlinear} arithmetic lattices in
semisimple algebraic groups $\mathbf G$, and show, under the
``congruence subgroup property'', that these lattices $\Gamma=\mathbf
G(\mathcal O)$ are products of local factors $\mathbf G(\mathcal O_v)$
and archimedian factors $\mathbf G(\C)$; consequently, the
representation zeta function $\zeta_\Gamma$ is the product of the
respective zeta functions; for example,
\[\zeta_{\mathbf {SL_3}(\Z)}(s)=\zeta_{\mathbf {SL_3}(\C)}(s)\prod_{p\text{ prime}}\zeta_{\mathbf {SL_3}(\Z_p)}(s).\]
A careful study of the absciss\ae\ of convergence of the
$\zeta_{\mathbf G(\mathcal O_v)}$ as a function of $v$ allowed Avni to
prove, in~\cite{avni:rational}, that $\zeta_\Gamma$ has a rational
abscissa of convergence; though its precise value is still mysterious.

The local factors $G=\mathbf G(\mathcal O_v)$ are compact $p$-adic
analytic groups, and Jaikin-Zapirain shows
in~\cite{jaikin-zapirain:zetapadic} that the representation zeta
function of such a group may be written as
\[\zeta_G(s)=\sum_{i=1}^k n_i^{-s} f_i(p^{-s})\]
for natural numbers $n_1,\dots,n_k$ and rational functions
$f_1,\dots,f_k\in\Q(p^{-s})$.

%As their arithmetic counterpart, the representation zeta functions
%$\zeta_G$ sometimes satisfy functional equations. In the case of
%algebraic groups $\mathbf G$ defined over $\Z$, one may consider the
%local zeta functions $\zeta_{\mathbf G(\Z_p)}(s)$. In favourable
%cases, they admit a uniform description as
%$\zeta_{\mathbf G(\Z_p)}(s)=W(p,p^{-s})$ for a rational function
%$W$. The functional equation then relates $W(X,Y)$ to
%$W(X^{-1},Y^{-1})$. See~\cites{avni-klopsch-onn-voll:repzeta_cras,avni-klopsch-onn-voll:repzeta},
%\cite{voll:zetagroupsrings} and the
%survey~\cite{klopsch:surveyrepzeta}.

\subsection{``Quoi de neuf, docteur?''}
Here is a quick summary of the main differences between this article
and~\cite{bartholdi-h:zetawreath}.

Firstly, Isaacs' notion of ``character triples'' is fundamental to the
calculations done here. I found it necessary to express character
triples slightly differently, by making explicit a marking with a
given finite group. This makes also more transparent the extent to
which character triples are convenient computational tools to study
and manipulate cohomological information. Thus while character triples
are triples $(\chi,N,G)$ with $\chi\in\widehat N$ and $N\triangleleft
G$, I prefer to fix a group $B$, and call \emph{$B$-character triple}
a pair $(\chi,f)$ with $\chi\in\widehat{\ker f}$ and $f$ a
homomorphism to $B$. One recovers the classical notion by taking for
$f$ the natural map $G\mapsto G/N$.

Secondly, I associate a \emph{branch structure} to a branched group
$G$. This is a data structure made of a finite group $B$, a subgroup
$B_+$ of $B\wr_X Q$, and a surjective map $B_+\twoheadrightarrow B$.
It seems to capture in an efficient manner the important properties of
a branched group. The group $G$ itself is not determined by the branch
structure, but one may construct out of the branch structure a
profinite group $G(B)$ with a canonical map $G\to G(B)$.

\subsection{Acknowledgments}
I am grateful to Marty Isaacs for an enlightening comment on the
isotropy of induced representations, to Pierre de la Harpe for helpful
comments on earlier installments of the text, to Patrick Neumann for
help with Lemma~\ref{lem:extend}, to Joerg Br\"udern for references on
Tauberian theorems, and to the referee for his/her thoughtful remarks.

%%%%%%%%%%%%%%%%%%%%%%%%%%%%%%%%%%%%%%%%%%%%%%%%%%%%%%%%%%%%%%%%
\section{Illustrations}\label{ss:illustrations}
I describe here some examples of self-similar branched groups, and
some information on their representation zeta functions. Let us start
by the precise definition of self-similar groups that we will use.
The definition of branched groups will appear in~\S\ref{ss:branch}.

\begin{defn}\label{def:ssg}
  A \emph{self-similar group} is a group $G$ endowed with an injective
  homomorphism $\psi\colon G\to G\wr_X Q$, for a permutation group $Q$
  acting on a finite set $X$. The map $\psi$ is called a
  \emph{self-similarity structure}, and the integer $\#X$ is called
  its \emph{degree}. Usually, the self-similarity structure is
  implicit, and one simply denotes by $G$ the self-similar group.
\end{defn}
The notation $\pair<g_1,\dots,g_d>q$ refers to the element of
$G\wr_X Q$ with $(g_1,\dots,g_d)\in G^X$ and $q\in Q$.

So as to avoid degenerate cases, we make the following restriction:
\begin{defn}\label{def:essg}
  An \emph{effective self-similar group} is a self-similar group whose
  branch structure satisfies the following conditions:
  \begin{enumerate}
  \item the degree $\#X$ is at least $2$;
  \item the action of $Q$ on $X$ is transitive;
  \item the projection $\psi(G)\to Q$ is surjective, and for each
    $x\in X$, the projection $\psi(G)\cap G^X\to G$ on co\"ordinate
    $x$ is surjective.\qedhere
  \end{enumerate}
\end{defn}
The second condition could, in fact, be relaxed to the requirement
that $Q$ act without fixed points on $X$. The third condition may be
ensured by replacing $Q$ by the image of $\psi$ and/or replacing $G$
by the projection of $\psi(G)\cap G^X$ to a co\"ordinate (possibly
after post-composing the self-similarity structure by an automorphism
of $G\wr_X Q$). All self-similar groups in this text are assumed to
be effective.

The map $\psi$ can be applied diagonally to all entries in $G^X$,
yielding a map $G^X\to(G\wr_X Q)^X$, and therefore a map $G\wr_X
Q\to(G\wr_X Q)\wr_X Q= G\wr_{X\times X}(Q\wr_X Q)$; more generally, we
write $\wr_X^nQ$ for the iterate $Q\wr_X\cdots\wr_X Q$, and get maps
$G\wr_{X^n}(Q\wr_X^n Q)\to G\wr_{X^{n+1}}(Q\wr_X^{n+1}Q)$ which we all
denote by $\psi$. We may compose these maps, and write $\psi^n$ for
the iterate $\psi^n\colon G\to G\wr_{X^n}(Q\wr_X^n Q)$.

By projecting to the permutation part, we then have homomorphisms
$G\to\sym{X^n}$ for all $n\in\N$ and, assembling these homomorphisms
together, we get a permutational action of $G$ on
$X^*=\bigsqcup_{n\ge0}X^n$; one may identify $X^*$ with the vertex set
of a rooted $\#X$-regular tree, by connecting $v_1\dots v_n$ to
$v_1\dots v_nv_{n+1}$ for all $v_i\in X$. In this manner, $G$ acts by
graph isometries. This action need not be faithful; if it is, then $G$
is called a \emph{faithful self-similar group}. In first three
examples below, this action is faithful; while in the fourth it is
not.

\begin{lem}
  Let $G$ be an effective self-similar group. Then its action on $X^n$
  is transitive for all $n\in\N$. In particular, $G$ is infinite.
\end{lem}
\begin{proof}
  We proceed by induction, the case $n=1$ being given by the second
  condition. Then, assuming that the action of $G$ is transitive on $X^n$,
  it follows from the third condition that the action of $\psi(G)\cap
  G^X$ on $x X^n$ is transitive for all $x\in X$, so the orbits of
  $\psi(G)\cap G^X$ are precisely $\{x X^n\}_{x\in X}$. Now since
  $\psi(G)$ maps onto $Q$ which is transitive, these orbits form a
  single $G$-orbit on $X^{n+1}$. Infiniteness of $G$ follows from the
  first assertion.
\end{proof}

The examples of groups that we consider below will be described by the
following data: a finite group $Q$, a finite $Q$-set $X$, a finitely
presented group $F$, and a homomorphism
$\tilde\psi\colon F\to F\wr_X Q$. Define normal subgroups of $F$ by
$R_0=1$ and $R_{n+1}=\tilde\psi^{-1}(R_n^X)$ for all $n\ge0$. The
\emph{injective quotient} of $F$ is by definition the self-similar
group $G:=F/\bigcup_{n\ge0}R_n$. The homomorphism $\tilde\psi$
descends to an injective map $\psi\colon G\hookrightarrow G\wr_X Q$.

\subsection{The Al\"eshin and Grigorchuk groups}\label{ss:gg}
The Grigorchuk group is obtained as follows. The cyclic group of order
$2$ is written $C_2$. Set
\[F=\langle a,b,c,d\mid a^2,b^2,c^2,d^2,bcd\rangle=C_2*(C_2\times C_2),\]
and define $\tilde\psi\colon F\to F\wr C_2$ by
\[\tilde\psi(a)=\pair<1,1>(1,2),\quad\tilde\psi(b)=\pair<a,c>,\quad\tilde\psi(c)=\pair<a,d>,\quad\tilde\psi(d)=\pair<1,b>.\]

Let $G$ be the injective quotient of $F$. It acts faithfully on
$\{1,2\}^*$.  A related group (see below) was first considered by
Al\"eshin in~\cite{aleshin:burnside}, providing a ``tangible'' example
of an infinite, finitely generated, residually finite, torsion group
(the first examples of groups with these properties are due to
Golod~\cite{MR28:5082}). Grigorchuk proved in~\cite{grigorchuk:growth}
that $G$'s word growth is strictly between polynomial and
exponential. See~\cite{harpe:ggt}*{Chapter~VIII} for an elementary
introduction to $G$. For its structure as a branched group,
see~\S\ref{ss:branch:gg}.

Since $G$ is a $2$-group, all its irreducible representations are
$2^n$-dimensional for some $n$; therefore $\zeta_G(s)=f(2^{-s})$ for a
power series $f\in\N[[2^{-s}]]$. Let us write $q=2^{-s}$ for brevity;
then the first values are
\begin{multline*}
  f(q)=8 + 10q + 29q^2 + 100q^3 + 413q^4 + 1990q^5 + 9787q^6 +
  50810q^7 + 278797q^8\\
  {} + 1593796q^9 + 9572828q^{10} + 60125360q^{11} + 396548538q^{12}\\
  {} + 2732836832q^{13} + 19674348692q^{14} + 147148989714q^{15} +\dots
\end{multline*}
and, for illustration, there are
$5554240222\cdots8648974784\approx5.5\cdot10^{93}$ irreducible
representations of degree $2^{100}$. This calculation took 4 minutes
on a 2010 laptop using~\textsc{Gap} and the author's
package~\textsc{Fr}. The functional equation involves $62$ variables
$\zeta_1,\dots,\zeta_{62}$.

The abscissa of convergence of $\zeta_G$ is computed as described
in~\S\ref{ss:using}, and is $\sigma_0(G)\approx3.293330470$.

Here is a brief description of Al\"eshin's group $\tilde G$ and its
relation to $G$. The Al\"eshin group can be viewed as a group acting
on $\{1,2\}^*$, generated by two elements $A,B$. The
recursions defining the generator's actions are
\[\psi^2(A)=\pair<\pair<a,c>,\pair<1,d>>,\qquad\psi^2(B)=\pair<\pair<1,1>,\pair<1,1>(1,2)>(1,2).\]

\begin{lem}\label{lem:GAcommensurable}
  The groups $G$ and $\tilde G$ have a common finite-index subgroup.
\end{lem}
\begin{proof}
  Consider the normal closure $\tilde G_0$ of $A$ in $\tilde
  G$. Clearly $\tilde G_0$ has index $4$ in $\tilde G$, and the
  generators of $\tilde G_0$ are involutions. The derived subgroup
  $\tilde G_0'$ therefore has finite index in $\tilde G$. Now
  $\psi^2(\tilde G_0')$ contains
  \[\psi^2([A,A^B])=[\pair<\pair<a,c>,\pair<1,d>>,\pair<\pair<d,1>,\pair<a,c>>]=\pair<\pair<[a,d],1>,\pair<1,1>>,
  \]
  so it contains $\pair<\pair<L,L>,\pair<L,L>>$ for the subgroup
  $L=\langle[a,d]\rangle^G$ of $G$. A direct computation shows that
  $L$ has index $32$ in $G$. Therefore, $L$ and $K$ have a common
  finite-index subgroup, so all of $\tilde G$,
  $\psi^{-2}\pair<\pair<L,L>,\pair<L,L>>$,
  $\psi^{-2}\pair<\pair<K,K>,\pair<K,K>>$, $K$ and $G$ have a common
  finite-index subgroup.
\end{proof}

It was already shown in~\cite{kargapolov-merzlyakov:otg}*{page 229}
that $G$ is a section of $\tilde G$; they poetically describe the
extraction of $G$ from $\tilde G$ as ``tearing off Adam's rib''.

\begin{cor}
  The representation zeta functions of $G$ and $\tilde G$ have the
  same absciss\ae\ of convergence.
\end{cor}
\begin{proof}
  By Lemma~\ref{lem:GAcommensurable}, the groups $G$ and $\tilde G$
  are commensurable. For two Dirichlet series $\eta(s)=\sum a_n
  n^{-s}$ and $\theta(s)=\sum b_n n^{-s}$, let us write
  $\eta\le\theta$ to mean $\sum_{j\le n}a_j\le\sum_{j\le n}b_j$ for
  all $n\in\N$. It follows
  from~\cite{lubotzky-martin:polynomialrepgrowth}*{Lemma~2.2} that if
  $G,H$ are groups and $H$ is a finite-index subgroup of $G$, then
  \begin{equation}\label{eq:index}
    \zeta_H(s)\le [G:H]^{1+s}\zeta_G(s)\text{ and }\zeta_G(s)\le[G:H]\zeta_H(s),
  \end{equation}
  so $\zeta_H$ and $\zeta_G$ have the same domain of convergence.
\end{proof}

% \subsection{The Twisted Twin}
% for this, we should compute the abelianization of K.

\subsection{The Gupta-Sidki group}\label{ss:gs}
The Gupta-Sidki groups are obtained as follows. The cyclic group of
order $p$ is written $C_p$. For each prime $p\ge3$, set
\[F_p=\langle a,t\mid a^p,t^p\rangle=C_p*C_p,\]
and define $\tilde\psi\colon F_p\to F_p\wr C_p$ by
\[\tilde\psi(a)=\pair<1,\dots,1>(1,\dots,p),\quad\tilde\psi(t)=\pair<a,a^{-1},1,\dots,1,t>.\]
Let $G_p$ be the injective quotient of $F_p$. It acts faithfully on
$\{1,\dots,p\}^*$.

These groups $G_p$ are shown in~\cite{gupta-s:burnside} to be
infinite, finitely-generated torsion $p$-groups.  For their structure
as branched groups, see~\S\ref{ss:branch:gs}. The study of their
representations was initiated by Passman and
Temple~\cite{passman-t:reps}; their main result, in the present
paper's language, is $\sigma_0(G_p)\ge p-2$.

We restrict our consideration to the case $p=3$.  Since $G_3$ is a
$3$-group, all its irreducible representations are $3^n$-dimensional
for some $n$; therefore $\zeta_{G_3}(s)=f(3^{-s})$ for a power series
$f\in\Z[[3^{-s}]]$. Writing $q=3^{-s}$, the first values are
\begin{multline*}
  f(q)=9 + 26q + 402q^2 + 6876q^3 + 178160q^4 + 7527942q^5 + 461931336q^6\\
  {} + 31704156696q^7 + 2421457788330q^8\\
  {} + 197775615899520q^9 + 16915932297409064q^{10} + \dots
\end{multline*}
and there are $1386068855\dots8306590020\approx1.3\cdot10^{96}$
representations of degree $3^{50}$. This calculation took 6 seconds on
a 2010 laptop using~\textsc{Gap} and the author's
package~\textsc{Fr}. The functional equation involves $8$
variables. It may be written in the slightly simplified form as
{\small\begin{align*}
  \zeta_{G_3}(s) = {} & \tfrac19q^2\zeta_1(s)+q\zeta_2(s) + q\zeta_3(s)
                   + 2q\zeta_4(s) + (9+2q)\zeta_6(s),\\
\zeta_1(s) = {} & \tfrac19q^2\zeta_1(s)^3
 + \tfrac13q^2\zeta_1(s)^2\zeta_2(s)
 + \tfrac13q^2\zeta_1(s)^2\zeta_3(s)
 + \tfrac23q^2\zeta_1(s)^2\zeta_4(s)
 + q^2\zeta_1(s)^2\zeta_6(s)\\
&+ \tfrac13q^2\zeta_1(s)\zeta_2(s)^2
 + \tfrac23q^2\zeta_1(s)\zeta_2(s)\zeta_3(s)
 + \tfrac43q^2\zeta_1(s)\zeta_2(s)\zeta_4(s)
 + 2q^2\zeta_1(s)\zeta_2(s)\zeta_6(s)\\
&+ \tfrac13q^2\zeta_1(s)\zeta_3(s)^2
 + \tfrac43q^2\zeta_1(s)\zeta_3(s)\zeta_4(s)
 + 2q^2\zeta_1(s)\zeta_3(s)\zeta_6(s)
 + q^2\zeta_1(s)\zeta_4(s)^2\\
&+ 2q^2\zeta_1(s)\zeta_4(s)\zeta_6(s)
 + q\zeta_2(s)^3
 + \tfrac13q^2\zeta_2(s)^2\zeta_3(s)
 + \tfrac23q^2\zeta_2(s)^2\zeta_4(s)\\
&+ 9q\zeta_2(s)^2\zeta_6(s)
 + \tfrac43q^2\zeta_2(s)\zeta_3(s)\zeta_4(s)
 + \tfrac23q^2\zeta_2(s)\zeta_4(s)^2
 + 18q\zeta_2(s)\zeta_6(s)^2\\
&+ \tfrac19q^2\zeta_3(s)^3
 + \tfrac23q^2\zeta_3(s)^2\zeta_4(s)
 + (3q+\tfrac23q^2)\zeta_3(s)\zeta_4(s)^2
 + 18q\zeta_3(s)\zeta_4(s)\zeta_6(s)\\
&+ 18q\zeta_3(s)\zeta_6(s)^2
 + \tfrac29q^2\zeta_4(s)^3
 + 6q\zeta_4(s)^2\zeta_6(s)
 + 36q\zeta_4(s)\zeta_6(s)^2\\
&+ 72\zeta_6(s)^3
 - q^2\zeta_1(3s)
 - 9q\zeta_2(3s)
 - q^2\zeta_3(3s)
 - 2q^2\zeta_4(3s)
 - 18\zeta_6(3s)\\
&= 54+\mathcal O(q),\\
\zeta_2(s) = {} & \tfrac13q^2\zeta_1(s)\zeta_4(s)^2
 + 2q^2\zeta_1(s)\zeta_4(s)\zeta_6(s)
 + 3q^2\zeta_1(s)\zeta_6(s)^2
 + \tfrac13q^2\zeta_2(s)\zeta_3(s)^2\\
&+ 2q^2\zeta_2(s)\zeta_3(s)\zeta_6(s)
 + \tfrac23q^2\zeta_2(s)\zeta_4(s)^2
 + 4q^2\zeta_2(s)\zeta_4(s)\zeta_6(s)
 + 9q\zeta_2(s)\zeta_6(s)^2\\
&+ q^2\zeta_3(s)^2\zeta_6(s)
 + \tfrac13q^2\zeta_3(s)\zeta_4(s)^2
 + 2q^2\zeta_3(s)\zeta_4(s)\zeta_6(s)
 + 9q\zeta_3(s)\zeta_6(s)^2\\
&+ \tfrac23q^2\zeta_4(s)^3
 + (3q+3q^2)\zeta_4(s)^2\zeta_6(s)
 + 18q\zeta_4(s)\zeta_6(s)^2
 + (9+18q)\zeta_6(s)^3
 - 3\zeta_6(3s)\\
&= 6+\mathcal O(q),\\
\zeta_3(s) = {} & q^2\zeta_1(3s)
 + 3q\zeta_2(3s)
 + q^2\zeta_3(3s)
 + 2q^2\zeta_4(3s)
 + 6\zeta_6(3s) = 6+\mathcal O(q),\\
\zeta_4(s) = {} & 3q\zeta_2(3s) + 6\zeta_6(3s) = 6+\mathcal O(q),\\
\zeta_6(s) = {} & \zeta_6(3s) = 1.
\end{align*}}

The abscissa of convergence of $\zeta_{G_3}$ is computed as described
in~\S\ref{ss:using}, and is $\sigma_0(G_3)\approx4.250099133$. In view
of the Passman-Temple result mentioned above, it would be interesting
to examine the dependency of $\sigma_0(G_p)$ on $p$.

\subsection{Wreath products}
There exist sundry residually-finite, finitely generated groups that
are isomorphic to their wreath product with a non-trivial finite
group; here is such an example. Set
\[F=A_5 * A_5,\] with $A_5$ the alternating group on five letters, and
distinguish both copies of $A_5$ by writing `$\overline a$' for
permutations in the second copy. Set $X=\{1,\dots,5\}$, and define
$\tilde\psi\colon F\to F\wr_X A_5$ by
\[\tilde\psi(a)=\pair<1,\dots,1>a,\quad\tilde\psi(\overline a)=\pair<\overline a,a,1,1,1>.\]
Let $W$ be the injective quotient of $F$; it acts faithfully on $X^*$.

This example was considered, among others,
in~\cite{bartholdi-h:zetawreath}*{Example 4}; it is a branched group,
and more precisely $\psi$ is an isomorphism. The representation zeta
function of $W$ starts as
\begin{multline*}
  \zeta_W(s)=1+2\cdot3^{-s}+4^{-s}+5^{-s}+6\cdot15^{-s}+3\cdot20^{-s}+3\cdot25^{-s}+2\cdot45^{-s}\\
  {}+60^{-s}+19\cdot75^{-s}+4\cdot90^{-s}+9\cdot100^{-s}+\cdots,
\end{multline*}
and has abscissa of convergence $\sigma_0(W)\approx1.17834859575464$,
computed as described in~\S\ref{ss:using}.

\subsection{Non-faithful self-similar groups}\label{ss:nonfaithful}
The group $W$ acts on the tree $X^*$, and therefore on its boundary
$X^\infty$. Consider the ray $\rho=1^\infty$ in it, and its orbit
$\mathcal O$ in $X^\infty$. Consider then the permutational wreath
product $G := C_2\wr_{\mathcal O}W$. This group is also self-similar;
to see that, consider now
\[F=\langle A_5, \overline{A_5}, s\mid s^2,[s,\overline a]\text{ for
  all }\overline a\in\overline{A_5}\rangle,\]
extend $\tilde\psi$ by
\[\tilde\psi(s)=\pair<s,1,1,1,1>,\]
and let $G$ be the injective quotient of $F$. Remark that $s$ acts
trivially on $X^*$, so that $G$ does not act faithfully on $X^*$. The
group $G$ is also branched, see~\S\ref{ss:branch:nonfaithful}. The
zeta function of $G$ starts as
\begin{multline*}
  \zeta_G(s)= 2+4\cdot3^{-s}+2\cdot4^{-s}+8\cdot5^{-s}+4\cdot10^{-s}+26\cdot15^{-s}+14\cdot20^{-s}+48\cdot25^{-s}+8\cdot45^{-s}\\
  {}+24\cdot50^{-s}+28\cdot60^{-s}+172\cdot75^{-s}+12\cdot80^{-s}+24\cdot90^{-s}+132\cdot100^{-s}+\cdots,
\end{multline*}
and has abscissa of convergence $\sigma_0(G)\approx1.64046292658488$,
as follows from~\S\ref{ss:using}.

%%%%%%%%%%%%%%%%%%%%%%%%%%%%%%%%%%%%%%%%%%%%%%%%%%%%%%%%%%%%%%%%
\section{Representations of extensions}\label{ss:clifford}
I recall Clifford's construction of representations of an
extension. First, a \emph{linear representation} of a group $G$ is a
homomorphism $\rho\colon G\to \GL_n(\C)$. Two linear representations
$\rho,\rho'\colon G\to \GL_n(\C)$ are \emph{equivalent}, written
$\sim$, if there exists $T\in \GL_n(\C)$ such that
$\rho(g)T=T\rho'(g)$ for all $g\in G$.

A \emph{projective representation} of a group $G$ is a homomorphism
$\rho\colon G\to \PGL_n(\C):=\GL_n(\C)/\C^\times$. Two projective
representations $\rho,\rho'$ are \emph{equivalent} if there exists
$T\in \PGL_n(\C)$ such that $\rho(g)T=T\rho'(g)$ for all $g\in G$.

Let $\rho$ be a linear or projective representation, to $\GL_n(\C)$ or
$\PGL_n(\C)$. Its \emph{degree} $\deg(\rho)$ is $n$. The
\emph{contragredient} representation $\rho^\vee$ is defined by
$\rho^\vee(g)=\rho(g^{-1})^*$, the matrix adjoint. For linear
representations $\rho,\sigma$ of degree $m,n$ respectively, the
\emph{tensor product} $\rho\otimes\sigma$ is the linear representation
$g\mapsto\rho(g)\otimes\sigma(g)$ into $\GL_{mn}(\C)$; and if
$\rho,\sigma$ are both projective representations, their tensor
product is a projective representation into $\PGL_{mn}(\C)$.

Let $\rho\colon G\to \PGL_n(\C)$ be a projective representation. Choose a
lift $\tilde\rho\colon G\to \GL_n(\C)$. Define then $\tilde
c_{\tilde\rho}\colon G\times G\to\C^\times$ by $\tilde
c_{\tilde\rho}(g,h)=\tilde\rho(g)\tilde\rho(h)/\tilde\rho(gh)$. A
quick calculation shows that $\tilde c_{\tilde\rho}$ satisfies the
$2$-cocycle identity
\[\tilde c_{\tilde\rho}(g,h) \,/\, \tilde c_{\tilde\rho}(g,hk)\times \tilde c_{\tilde\rho}(gh,k) \,/\, \tilde c_{\tilde\rho}(h,k)=1,\]
and therefore defines a cohomology class $c_\rho$ in
$H^2(G,\C^\times)$, which depends on $\rho$ only, and not on the
choice of lift $\tilde\rho$.

\subsection{Exact sequences} Let now
\[1\longrightarrow N\longrightarrow G\overset f\longrightarrow Q\longrightarrow 1
\]
be an exact sequence. If $\rho$ be a representation (linear or
projective) of $N$, its \emph{inertia} is the group $G_\rho=\{g\in
G\mid {}^g\!\rho\sim\rho\}$ consisting of those $g\in G$ such that the
conjugate representation ${}^g\!\rho\colon n\mapsto\rho(n^g)$ is equivalent to
$\rho$. The representation $\rho$ is said to be \emph{inert in $H$}
whenever $H\le G_\rho$.

Assume now that $\rho$ is an irreducible, degree-$n$ linear
representation of $N$. Then $\rho$ extends to a unique projective
representation $\overline\rho$ of $G_\rho$, as follows. Fix a right
transversal $X$ of $N$ in $G_\rho$. For each $x\in X$, choose $T_x\in
\GL_n(\C)$ such that $T_x\rho(h^x)=\rho(h)T_x$ for all $h\in N$; this
$T_x$ is unique up to scalars, by Schur's Lemma.  For $g=hx\in
G_\rho$, set $\tilde\rho(g)=\rho(h)T_x$, and let $\overline\rho(g)$ be
$\tilde\rho(g)$'s image in $\PGL_n(\C)$. Then, since the $T_x$ are
uniquely determined, $\overline\rho$ is a projective
representation. Furthermore, the $2$-cocycle $\tilde c_{\tilde\rho}$
vanishes on $N\times N$, so defines a cohomology class $c_\rho\in
H^2(G_\rho/N,\C^\times)$.

Let $\chi$ be an irreducible projective representation of $G_\rho/N$
with cohomology class $c_\rho^{-1}$; then
$\overline\rho\otimes(\chi\circ f)$ is a projective representation of
$G_\rho$ with trivial cohomology class. Say $\chi$ is of degree $m$,
and let $\tilde\chi$ be a lift $G_\rho/N\to\GL_m(\C)$ of $\chi$; then
$\tilde\rho\otimes(\tilde\chi\circ f)$ is a lift of
$\overline\rho\otimes(\chi\circ f)$, so its $2$-cocycle is a
coboundary, namely the $2$-cocycle $(\delta b)(g,h)=b(g)b(h)/b(gh)$
associated with a function $b\colon G_\rho/N\to\C^\times$. Furthermore, $b$
is unique up to multiplication by a homomorphism $\mu\in
H^1(G_\rho/N,\C^\times)$. Then
$g\mapsto\tilde\rho(g)\otimes\tilde\chi(f(g))/b(g)$ is a linear
representation of $G_\rho$, which we denote by
$\sigma'_{\rho,\chi}$.

We call such $\sigma'_{\rho,\chi}$ \emph{extensions} of $\rho$; they
are irreducible representations whose restriction to $N$ is a direct
sum of copies of $\rho$.  Finally, let $\sigma_{\rho,\chi,\mu}$ be the
induced representation of $\sigma'_{\rho,\chi}\otimes\mu$ up to $G$.

\begin{thm}[Clifford~\cite{clifford:induced}]\label{thm:clifford}
  With the notation above, $\sigma_{\rho,\chi,\mu}$ is an irreducible
  representation of $G$, and every irreducible representation of $G$
  is equivalent to some $\sigma_{\rho,\chi,\mu}$.

  The multiplicity of $\sigma_{\rho,\chi,\mu}$ in that list behaves as
  follows: for a group $Q$ and a class $c\in H^2(Q,\C^\times)$, denote
  by $\widehat Q^c$ the set of equivalence classes of projective
  representations of $Q$ with cocycle $c$; then the correspondence
  $(\rho,\chi,\mu)\mapsto\sigma_{\rho,\chi,\mu}$ is a map
  \[\sigma\colon\bigsqcup_{\rho\in\widehat N}\left(\widehat{G_\rho/N}^{c_\rho^{-1}}\times H^1(G_\rho/N,\C^\times)\right)\to\widehat G\]
  which is surjective, and such that every $\sigma_{\rho,\chi,\mu}$
  has $\#H^1(G_\rho/N,\C^\times)\cdot[G:G_\rho]$ preimages.\qed
\end{thm}

We will need to understand how the inertia
subgroup changes under extension. I state the following property as a
general lemma:
\begin{lem}\label{lem:extend}
  Let $G$ be a group with normal subgroup $N$; let $\rho$ be a
  representation of $N$. Consider a subgroup $H$ with $N\le H\le
  G_\rho$. Let $\sigma$ be an extension of $\rho$ to $H$. Then
  $G_\sigma\le G_\rho$.
\end{lem}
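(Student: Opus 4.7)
The approach is to restrict any intertwining operator witnessing $g\in G_\sigma$ to the normal subgroup $N$, and then invoke the defining property of an extension, namely that $\sigma|_N$ is a direct sum of copies of $\rho$.

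Take $g\in G_\sigma$; implicit in this is that $g$ normalises $H$, so that $\sigma$ and its conjugate ${}^g\sigma$ are representations of the same group and the equivalence ${}^g\sigma\sim\sigma$ is meaningful. Choose an invertible matrix $T$ of size $\deg\sigma$ with $T\sigma(h^g)=\sigma(h)T$ for all $h\in H$. Specialising to $h=n\in N\subseteq H$ and using that $n^g\in N$ by normality of $N$ in $G$, the very same $T$ witnesses an equivalence $\sigma|_N\sim{}^g(\sigma|_N)$ of representations of $N$.

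By the definition of an extension recalled after Theorem~\ref{thm:clifford}, $\sigma|_N$ is isomorphic to $e$ copies of $\rho$, where $e=\deg\sigma/\deg\rho$; similarly ${}^g(\sigma|_N)$ is isomorphic to $e$ copies of ${}^g\rho$. Since $\rho$, and therefore also ${}^g\rho$, is irreducible, uniqueness of decomposition of a completely reducible representation into irreducible summands forces $\rho\sim{}^g\rho$, which is exactly the condition $g\in G_\rho$.

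I do not foresee any real obstacle: the lemma is an essentially formal consequence of the definitions, and amounts to the observation that an equivalence between the larger representations $\sigma$ and ${}^g\sigma$ must descend to an equivalence between their $\rho$-isotypic restrictions to $N$. The only point requiring a sentence of care is to fix the interpretation of $G_\sigma$, via the tacit condition $g\in N_G(H)$, so that the conjugate representation ${}^g\sigma$ lives on the same group $H$ as $\sigma$ and the two may meaningfully be compared.
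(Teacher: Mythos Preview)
Your proof is correct and follows essentially the same approach as the paper: both restrict the intertwiner $T$ witnessing ${}^g\sigma\sim\sigma$ to $N$ and exploit that $\sigma|_N$ is a direct sum of copies of $\rho$. The only cosmetic difference is that the paper writes $T$ explicitly as a block matrix and argues that some block (or linear combination thereof) gives an invertible intertwiner between $\rho$ and ${}^g\rho$, whereas you appeal directly to uniqueness of the decomposition into irreducibles; these are two phrasings of the same Schur-lemma argument, and your version is arguably cleaner.
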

\begin{proof}
  Since $\sigma$ is an extension of $\rho$ and $\rho$ is inert in $H$,
  the restriction of $\sigma$ to $N$ is a direct sum of $[H:N]$ copies
  of $\rho$. Consider $g\in G_\sigma$, and write $T_g$ as a
  $[H:N]\times[H:N]$ block matrix. Then
  $(T_g)_{ij}\rho(n^g)=\rho(n)(T_g)_{ij}$ for all
  $i,j\in\{1,\dots,[H:N]\}$; and since $T_g$ is invertible, the
  $(T_g)_{ij}$ span $M_n(\C)$ so some linear combination $U_g$ of them
  is invertible; then $U_g\rho(n^g)=\rho(n)U_g$ so $g\in G_\rho$.
\end{proof}

%\begin{lem}\label{lem:induce} \Wrong! See Isaacs's comment on math overflow
%   Let $G$ be a group with normal subgroup $N$; let $\rho$ be an
%   irreducible representation of $N$. Consider a subgroup $H\ge N$ with
%   $H\cap G_\rho=N$. Then $G_{\rho^H}=H\cdot (N_G(H))_\rho$.
% \end{lem}
% \begin{proof}
%   Let $X$ be a right transversal of $N$ in $H$. The restriction of
%   $\rho^H$ to $N$ is $\bigoplus_{x\in X}\rho^x$, and by assumption all
%   $\rho^x$ are inequivalent, so this direct sum decomposition is
%   unique.
% 
%   Consider first $g\in G_{\rho^H}$. Then $\rho^g\cong\rho^x$ for some
%   $x\in X$, and therefore $g\in H G_\rho$. Since $g\in N_G(H)$, we
%   have $g\in H\cdot (N_G(H))_\rho$.
% 
%   Conversely, if $g\in H$ then obviously $\rho^H\sim(\rho^H)^g$, while
%   if $g\in (N_G(H))_\rho$ then $\rho^x\sim\rho^{x^g}$ for all $x\in X$
%   so $\rho^H\sim(\rho^H)^g$.
% \end{proof}

%%%%%%%%%%%%%%%%%%%%%%%%%%%%%%%%%%%%%%%%%%%%%%%%%%%%%%%%%%%%%%%%
\section{Representation triples}
I recall Isaacs' notion of \emph{character triple}, with a slightly
different notation. See also~\cite{jaikin-zapirain:zetapadic}*{\S5}
for a more modern formulation.

\begin{defn}
  Let $B$ be a finite group. A \emph{$B$-representation triple} is a
  pair $\Theta=(\rho,f)$, with $f\colon G\to B$ a homomorphism with
  kernel $N$ and $\rho\in\widehat N$ a representation that is inert in
  $G$.
\end{defn}
(The reader may wonder why they are called triples and not
pairs. Isaacs' original definition involves triples $(\chi,N,G)$ with
$\chi$ an $N$-character that is inert in $G$. We explicitly add a
marking by a group $B$ to the data, and remove $B$, $G$ and $N$ from
the notation.)

We introduce the following terminology: for a $B$-representation
triple $\Theta=(\rho,f)$, its \emph{source} is
$\src(\Theta):=\src(f):=G$; its \emph{image} is
$\img(\Theta):=\img(f):=f(G)\le B$; its \emph{representation} is
$\rho(\Theta):=\rho$; its \emph{marking} is $f(\Theta):=f$.  If
$\Theta=(\rho,f)$, we also define $\Theta^\vee=(\rho^\vee,f)$ the
triple with same marking but contragredient representation.

A morphism between two $B$-representation triples $(\rho,f)$ and
$(\rho',f')$ is a map $\sigma\colon\src(f)\to \src(f')$ such that
$f'=f\circ\sigma$ and $\rho\sim\rho'\circ\sigma$. There is also a
weaker notion than isomorphism of $B$-representation triples, that of
\emph{equivalence}, which we describe now.

For $G$ a group with normal subgroup $N$ and $\rho\in\widehat N$, let
$\mathcal R(G|\rho)$ denote the monoid of representations of $G$ whose
restriction to $N$ is a multiple of $\rho$. It is an abelian monoid,
freely generated by the irreducible representations of $G$ that
restrict to a multiple of $\rho$, and admits a scalar product
$\langle\mid\rangle$ making the irreducible representations an
orthonormal basis.

\begin{defn}[Essentially~\cite{isaacs:ctfg}*{Definition~11.23}]\label{def:equiv}
  Two $B$-representation triples $(\rho,f)$ and $(\rho',f')$ are
  \emph{equivalent} if $\img(f)=\img(f')$ and for every $H\le \img(f)$
  there exists an isometry
  \[\sigma_H\colon\mathcal R(f^{-1}(H)|\rho)\to\mathcal R((f')^{-1}(H)|\rho')\]
  such that, for every $N\le H\le\img(f)$ and every $\chi\in\mathcal
  R(f^{-1}(H)|\rho)$, we have
\[\sigma_{f^{-1}(N)}(\chi_{f^{-1}(N)})=(\sigma_{f^{-1}(H)}(\chi))_{(f')^{-1}(N)},\]
\[\sigma_{f^{-1}(H)}(\chi\otimes(\beta\circ f))=\sigma_{f^{-1}(H)}(\chi)\otimes(\beta\circ f')\text{ for all }\beta\in\widehat{\img(f)}.\qedhere\]
\end{defn}

Schur considered projective representations
in~\cites{schur:darstellung,schur:untersuchungen}. In modern language,
he showed that $H_2(G,\Z)$ is finite for every finite group $G$, and
that there exists at least one extension
\[1\longrightarrow H_2(G,\Z)\longrightarrow \tilde G\overset f\longrightarrow G\longrightarrow 1
\]
such that $H_2(G,\Z)$ is contained in $[\tilde G,\tilde G]$; this
implies in particular that the lift of any generating set of $G$ is a
generating set of $\tilde G$. One calls $\tilde G$ a \emph{Schur
  cover} of $G$, and the epimorphism $f$ a \emph{Schur covering
  map}\footnote{$\tilde G$ is sometimes called a \emph{stem cover}.}.

\begin{thm}[Isaacs,~\cite{isaacs:ctfg}*{Theorem~11.28}]\label{thm:isaacs}
  Every $B$-representation triple is equivalent to a
  $B$-representation triple $(\chi,f)$ with $f\colon\tilde H\to H\le B$ a
  Schur covering map, and $\chi\in\widehat{H_2(H,\Z)}=H^2(H,\C^\times)$.
\end{thm}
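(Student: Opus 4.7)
The plan is to extract from $(\rho,f)$ the cohomology class that classifies its Clifford extension, convert this class to a linear character of the kernel of a Schur cover, and then verify that the resulting triple is equivalent to $(\rho,f)$ in the sense of Definition~\ref{def:equiv}.

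First I would write $N=\ker(f)$, $G=\src(f)$, and $H=\img(f)\le B$. Because $\rho$ is inert in $G$, the construction recalled in~\S\ref{ss:clifford} extends $\rho$ to a projective representation $\overline\rho$ of $G$ whose $2$-cocycle vanishes on $N\times N$; this produces a cohomology class $c\in H^2(H,\C^\times)$. I would then fix a Schur covering map $p\colon\tilde H\to H$ with kernel $A=H_2(H,\Z)$, which is contained in $[\tilde H,\tilde H]\cap Z(\tilde H)$. By the universal coefficient theorem, $H^2(H,\C^\times)\cong\operatorname{Hom}(H_2(H,\Z),\C^\times)=\widehat A$; let $\chi\in\widehat A$ be the character corresponding to $c$. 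Because $A$ is central, $\chi$ is automatically inert in $\tilde H$, so $(\chi,p)$ is a $B$-representation triple with the same image $H$ as $(\rho,f)$.

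To show these triples equivalent, I would construct, for each $K\le H$, a bijection
\[\sigma_K\colon\mathcal R(f^{-1}(K)|\rho)\longrightarrow\mathcal R(p^{-1}(K)|\chi).\]
Writing $G_K=f^{-1}(K)$ and $\tilde K=p^{-1}(K)$, Lemma~\ref{lem:extend} ensures that $\rho$ remains inert in $G_K$. Theorem~\ref{thm:clifford} (with the induction step trivial because $\rho$ is inert) identifies the irreducible members of $\mathcal R(G_K|\rho)$ with the irreducible projective representations of $K$ of cohomology class $c^{-1}|_K$; the defining property of the Schur cover then pulls these back bijectively to irreducible linear representations of $\tilde K$ whose restriction to $A$ is a multiple of $\chi|_A$, i.e.\ to the irreducible members of $\mathcal R(\tilde K|\chi)$. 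Extending $\sigma_K$ by linearity makes it an isometry, since each side has its irreducibles as an orthonormal basis.

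The main obstacle is verifying the two compatibilities in Definition~\ref{def:equiv}. Tensoring by the pullback of $\beta\in\widehat H$ is easy, since on both sides it corresponds to tensoring $\beta$ into the projective factor. The restriction compatibility is more delicate: it requires that the projective-to-linear correspondence be natural under the pullback $H^2(K,\C^\times)\to H^2(K',\C^\times)$. Concretely, one must track how the choice of lifts $T_x$ made in the construction of~\S\ref{ss:clifford} behaves when $G_K$ is replaced by $G_{K'}$, and verify that the induced restriction on projective representations of $K$ matches the restriction on linear representations of $\tilde K$ via the canonical map from a Schur cover of $K'$ into $p^{-1}(K')$. This is essentially the content of~\cite{isaacs:ctfg}*{Chapter~11}; the marking by $B$ plays no role once $\img(f)=\img(f')=H$ is fixed.
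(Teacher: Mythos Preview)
The paper does not supply its own proof of this statement; it is quoted from Isaacs with a citation to Theorem~11.28 of~\cite{isaacs:ctfg}. Your outline is essentially the standard argument found there, recast in the paper's $B$-marked language: extract the cocycle $c\in H^2(H,\C^\times)$ from the inert $\rho$, realise it as a linear character $\chi$ of the kernel of a Schur cover via the universal-coefficient isomorphism, and then match both $\mathcal R(f^{-1}(K)|\rho)$ and $\mathcal R(p^{-1}(K)|\chi)$ with the set of irreducible projective representations of $K$ carrying the restricted cocycle.

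Two minor remarks. First, your appeal to Lemma~\ref{lem:extend} to see that $\rho$ is inert in $G_K$ is unnecessary: since $G_K\le G=G_\rho$, this is immediate. Second, you have correctly located the only genuine subtlety, namely that $p^{-1}(K)$ is merely a central extension of $K$ by $A$ and not in general a Schur cover of $K$; what makes the correspondence work is that linear representations of $p^{-1}(K)$ on which $A$ acts by $\chi$ are exactly the projective representations of $K$ with cocycle $\chi_*(\alpha|_K)=(\chi_*\alpha)|_K=c|_K$, where $\alpha\in H^2(H,A)$ is the class of the Schur extension. This naturality under restriction is precisely what yields the compatibility required in Definition~\ref{def:equiv}, and is the content of the relevant pages in~\cite{isaacs:ctfg}.
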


In particular, there are finitely many equivalence classes of
$B$-representation triples.  A $B$-representation triple
$\Theta=(\rho,f)$ is a convenient way of keeping track of a group
$\img(f)$ and a cohomology class in $H^2(\img(f),\C^\times)$.

The two procedures at the heart of Clifford's description
from~\S\ref{ss:clifford} --- extension and induction --- can be
rephrased in terms of representation triples.

Consider a $B$-representation triple $\Theta=(\rho,f)$, and a
homomorphism $g\colon B\to C$. Let $L$ denote the kernel of $g\circ f$; we have
$\ker(f)=N\le L\le G=\src(f)$. Let $\{\rho_1,\dots,\rho_n\}$ denote
those irreducible representations of $L$ that restrict on $N$ to a
multiple of $\rho$. For $i=1,\dots,n$, let $G_i$ denote the inertia of
$\rho_i$ in $G$. The $g$-\emph{extensions} of $\Theta$ are the
$C$-representation triples
$\Theta_1=(\rho_1,(g\circ f)|_{G_1}),\dots,\Theta_n=(\rho_n,(g\circ f)|_{G_n})$.

\begin{lem}
  The equivalence classes of the $C$-triples $(\Theta_i)_{1\le i\le
    n}$ depend only on the equivalence class of $\Theta$.
\end{lem}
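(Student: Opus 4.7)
\emph{Plan.} An equivalence $\Theta=(\rho,f)\sim\Theta'=(\rho',f')$ supplies isometries $\sigma_H\colon\mathcal{R}(f^{-1}(H)|\rho)\to\mathcal{R}((f')^{-1}(H)|\rho')$ for all $H\le\img(f)=\img(f')$, compatible with restrictions and with tensoring by pullbacks of linear characters of $\img(f)$. I would use these data to produce a bijection between the two families of $g$-extensions together with equivalences between matched triples. Set $H_0=\ker(g)\cap\img(f)$, so that $L=f^{-1}(H_0)$ and $L'=(f')^{-1}(H_0)$. The irreducibles $\rho_1,\dots,\rho_n$ form an orthonormal basis of the free abelian monoid $\mathcal{R}(L|\rho)$, and similarly on the primed side. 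Any isometry of free abelian monoids with distinguished orthonormal bases must permute the bases (if $\sigma_{H_0}(\rho_i)=\sum_k n_{ik}\rho'_k$, then $\sum_k n_{ik}^2=\langle\sigma_{H_0}(\rho_i),\sigma_{H_0}(\rho_i)\rangle=1$ with $n_{ik}\in\mathbb{N}$ forces a single $n_{ik}=1$), so $\sigma_{H_0}$ yields a bijection $i\mapsto i'$ with $\sigma_{H_0}(\rho_i)=\rho'_{i'}$.

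Next, I would show that the inertia subgroups correspond intrinsically, namely $G_i=f^{-1}(H_i)$ and $G'_{i'}=(f')^{-1}(H_i)$ for a common $H_0\le H_i\le\img(f)$. The key Clifford-theoretic observation is that, for $H_0\le H\le\img(f)$, the irreducible $\rho_i$ is $f^{-1}(H)$-invariant if and only if there exists an irreducible $\chi\in\mathcal{R}(f^{-1}(H)|\rho)$ whose restriction to $L$ is a (positive) multiple of $\rho_i$ alone. Indeed, by Clifford, any irreducible $\chi\in\mathcal{R}(f^{-1}(H)|\rho)$ restricts on $L$ to a common multiple of an $f^{-1}(H)$-orbit of $\rho_j$'s, and conversely an extension of the desired form exists as soon as the orbit of $\rho_i$ is a singleton. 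The restriction compatibility of $\sigma_H$ together with the matching from step~1 transports this existence statement between the two sides, so $f(G_i)=f'(G'_{i'})=:H_i$ and hence $\img(\Theta_i)=g(H_i)=\img(\Theta'_{i'})$.

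Finally, for each $K\le g(H_i)$ I set $H=g^{-1}(K)\cap H_i$ and note that $(g\circ f)|_{G_i}^{-1}(K)=f^{-1}(H)$, and likewise on the primed side. The sub-monoid $\mathcal{R}(f^{-1}(H)|\rho_i)\subset\mathcal{R}(f^{-1}(H)|\rho)$ is cut out by the condition that restriction to $L$ lie in the orthonormal sub-monoid generated by $\rho_i$ alone; by restriction compatibility and step~1, $\sigma_H$ carries it isometrically onto $\mathcal{R}((f')^{-1}(H)|\rho'_{i'})$, giving the required map $\tau_K$. These $\tau_K$ inherit the restriction and tensor compatibilities of Definition~\ref{def:equiv} from the $\sigma_H$'s, the tensor axiom using the identity $\beta\circ(g\circ f)=(\beta\circ g)\circ f$ for $\beta\in\widehat{g(H_i)}$. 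This yields $\Theta_i\sim\Theta'_{i'}$ as $C$-representation triples.

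The main obstacle I expect lies in step~2: the definition $G_i=\{x\in G:{}^x\rho_i\sim\rho_i\}$ refers to conjugation data not directly visible to the monoid--isometry formalism, so the translation to an existence statement about irreducibles in $\mathcal{R}(f^{-1}(H)|\rho)$ must be set up carefully through Clifford's decomposition of $\chi|_L$ along the normal inclusion $L\le f^{-1}(H)$. Once that characterization is in hand, the remaining bookkeeping in step~3 is essentially formal.
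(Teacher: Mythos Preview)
Your approach is correct and is essentially the detailed unpacking that the paper's one-line proof (``Follows immediately from Definition~\ref{def:equiv} and Lemma~\ref{lem:extend}'') gestures at. The three steps you outline---bijecting the $\rho_i$ via the isometry $\sigma_{H_0}$, matching inertia groups through an intrinsic characterization in terms of restriction behaviour of irreducibles in $\mathcal R(f^{-1}(H)\mid\rho)$, and then restricting the $\sigma_H$ to the relevant sub-monoids---are exactly what is needed, and your handling of the ``main obstacle'' in step~2 via Clifford's description of $\chi|_L$ as a multiple of a single $f^{-1}(H)$-orbit is the right mechanism.

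One small remark: the paper cites Lemma~\ref{lem:extend} alongside the definition, whereas you do not invoke it explicitly. Your argument replaces that citation by a direct appeal to Clifford's theorem for the normal inclusion $L\trianglelefteq f^{-1}(H)$; the content is the same, since Lemma~\ref{lem:extend} is itself a Clifford-type statement about how inertia behaves under extension. In the tensor-compatibility check of step~3, note that Definition~\ref{def:equiv} as written quantifies over $\beta\in\widehat{\img(f)}$; for your identity $\beta\circ(g\circ f)=(\beta\circ g)\circ f$ to land there, you are implicitly reading the definition in its intended (Isaacs-style) form with $\beta$ ranging over characters of the intermediate quotient $H$, which is what makes the axiom usable at every level and is clearly what the paper means.
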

\begin{proof}
  Follows immediately from Definition~\ref{def:equiv} and
  Lemma~\ref{lem:extend}.
\end{proof}

Note that extension of triples covers both extension and induction;
the induction is performed from $\ker(g)\cap\img(f)$ to $\ker(g)$, or,
equivalently, from $\img(f)$ to $\img(f)\ker(g)$, and in fact does not
modify the triple at all. This is seen as follows. Consider a
$B$-representation triple $\Theta=(\rho,f)$ with $\rho\in\widehat N$
and $f\colon G\to B$. Let $H,M$ be groups with $N\triangleleft G\le H,N\le
M\triangleleft H$ and $M\cap G=N$ and $MG=H$ and $H_\rho=G$. Then
$G/N\cong H/M$; define $h\colon H\to B$ by $h(xy)=f(y)$ for $x\in M,y\in G$;
this is well-defined because $M\cap G=N=\ker(f)$. Note
$\ker(h)=M$. Induce $\rho$ to $M$, and let $\Theta'$ be the
$B$-representation $(\rho^M,h)$.

\begin{lem}[see~\cite{jaikin-zapirain:zetapadic}*{Corollary~5.3}]
  The triples $\Theta$ and $\Theta'$ are equivalent.
\end{lem}
\begin{proof}
  Follows immediately from Definition~\ref{def:equiv}. The map
  $\sigma\colon\mathcal R(G|\rho)\to\mathcal R(H|\rho^M)$ is simply given
  by induction to $H$, namely $\chi\mapsto\chi^H$.
\end{proof}

% We define the zeta function of a triple $\Theta$ as follows:
% \[\zeta_\Theta(s)=\sum_{\chi\in\mathcal R(\src(\Theta)|\rho(\Theta))}(\deg\chi)^{-s}.
% \]
% Thus $\zeta_\Theta$ counts the ``twisted representations'' of
% $\img(\Theta)$ with cocycle $\rho(\Theta)$. In particular, if $f\colon
% B\to B$ is the identity map, $\zeta_{(f,1)}=\zeta_B$.

We may deduce from Theorem~\ref{thm:clifford} a formula expressing the
representation zeta function of a group in terms of representations of
a normal subgroup.  Consider an exact sequence
  \[1\longrightarrow N\longrightarrow G\overset f\longrightarrow B\longrightarrow 1
\]
For a $B$-representation triple $\Theta$, define the Dirichlet series
\[\zeta_{G,\Theta}(s)=\sum_{\substack{\rho\in\widehat N\\ (\rho,f)\sim\Theta}}(\deg\rho)^{-s}.\]
\begin{prop}\label{prop:cliffordtriple}
  With the notation above,
  \[\zeta_G(s)=\sum_{\Theta\in\{\text{$B$-representation triples}\}}\zeta_{G,\Theta}(s)\zeta_{\Theta^\vee}(s)[B:\img(\Theta)]^{-1-s}.\]
\end{prop}
\begin{proof}
  Consider an irreducible representation $\rho$ of $N$ with character
  triple $\Theta$; such representations are counted by
  $\zeta_{G,\Theta}(s)$. According to Theorem~\ref{thm:clifford}, a
  representation of $G$ is obtained by tensoring $\rho$ with a
  representation $\chi$ of opposite cocycle, so as to obtain a linear
  representation of $\rho$'s inertia subgroup; such $\chi$ are counted
  by $\zeta_{\Theta^\vee}(s)$. This representation is then induced to
  a representation of $G$; induction increases the degree by
  $[B:\img(\Theta)]$, and yields $[B:\img(\Theta)]$ copies of the same
  representation of $G$.
\end{proof}

%%%%%%%%%%%%%%%%%%%%%%%%%%%%%%%%%%%%%%%%%%%%%%%%%%%%%%%%%%%%%%%%
\section{Branched groups}\label{ss:branch}
We turn now to the notion of \emph{self-similar branched group}, presenting it
in a slightly more general and algebraic manner than is usual;
see~\cite{nekrashevych:ssg} or~\cite{bartholdi-g-s:bg} for classical
references.

Let $G$ be a self-similar group with self-similarity structure
$\psi\colon G\to G\wr_X Q$.
\begin{defn}\label{def:branch}
  The self-similar group $G$ is \emph{branched} if there exists a
  finite-index subgroup $K\le G$ such that $\psi(K)\ge K^X$. One says
  then that $G$ is branched \emph{over} $K$.
\end{defn}
The subgroup $K$ may be assumed to be normal; and in fact there exists
a maximal such $K$, because if $K_0,K_1$ both satisfy $\psi(K_i)\ge
K_i^X$ then $\langle K_0,K_1\rangle$ also satisfies that property.

For purposes of computation, it is useful to introduce a finite
structure capturing important features of branched groups.
\begin{defn}\label{def:branchstruc}
  A \emph{branch structure} is a pair $(B,\phi)$ such that
  \begin{enumerate}
  \item $B$ is a finite group;
  \item $\phi$ is an epimorphism from a subgroup $B_+$ of $B\wr_X Q$
    onto $B$.
  \end{enumerate}

  Let $G$ be a self-similar group. A \emph{branch structure for $G$}
  is a branch structure $(B,\phi)$ such that
  \begin{enumerate}
  \item there exists an epimorphism $f\colon G\twoheadrightarrow B$;
  \item denoting $f_1$ the natural map $f\wr1\colon G\wr_X Q\to B\wr_X
    Q$, we have $B_+=f_1\psi(G)$ and $f=\phi f_1\psi$:
    \[\begin{tikzcd} G\arrow[hookrightarrow]{r}{\psi}\arrow[twoheadrightarrow,swap]{d}{f} & \psi(G)\subseteq G\wr_X Q\arrow[twoheadrightarrow]{d}{f_1}\\
      B & \arrow[twoheadrightarrow]{l}{\phi} B_+\subseteq B\wr_X Q.
    \end{tikzcd}\qedhere\]
  \end{enumerate}
\end{defn}

\begin{lem}
  A self-similar group is branched if and only if it has a branch structure.
\end{lem}
\begin{proof}
  Assume first that $G$ is branched over its normal subgroup
  $K$. Define $B=G/K$ with natural map $f\colon G\to B$. Define then
  $f_1$ as in Definition~\ref{def:branchstruc}, and set
  $B_+=f_1\psi(G)$. Define finally $\phi\colon B_+\twoheadrightarrow B$ by
  $\phi(f_1(\psi(g)))=f(g)$. This map is well-defined because
  $K^X\le \psi(K)$.

  Conversely, if $(B,\phi)$ is a branch structure for $G$ then let $K$
  denote the kernel of a map $f\colon G\to B$ as in
  Definition~\ref{def:branchstruc}, and note that $G$ is branched over
  $K$.
\end{proof}
Note that, just as there exists a maximal subgroup $K$ in
Definition~\ref{def:branch}, there exists a minimal branch structure
$(B,\phi)$.

The branch structure captures all the information we will need of $G$,
so that we may forget $G$ altogether when we have its branch
structure. In fact, let $(B,\phi)$ be a branch structure for
$G$. Define then a sequence of groups $G_n$, with maps $\phi_n\colon
G_n\to G_{n-1}$, as follows: $G_{-1}=B$, $G_0=B_+$, $\phi_0=\phi$, and
$G_{n+1}=\{\pair<g_x>q\in G_n\wr_X Q\mid \pair<\phi_n(g_x)>q\in
G_n\}$, with $\phi_{n+1}(\pair<g_x>q)=\pair<\phi_n(g_x)>q$. Finally
form the inverse limit
\[G(B)=\varprojlim(G_n,\phi_n).\]

\begin{lem}
  If $(B,\phi)$ be a branch structure, then the group $G(B)$ is a
  profinite self-similar branched group, and $B$ is a branch structure
  for $G(B)$.

  If furthermore $(B,\phi)$ be a branch structure for $G$, then there
  exists a canonical map $\iota\colon G\to G(B)$ interlacing the
  self-similarity structures of $G$ and $G(B)$, and $\iota$ is
  injective if $G$ is faithful.
\end{lem}
\begin{proof}
  It is clear that $G(B)$ is profinite, being defined as a limit of
  finite groups.

  An element of $G(B)$ is a sequence $h=(\cdots\twoheadrightarrow
  h_n\twoheadrightarrow h_{n-1}\cdots)$, with $h_n\in G_n$, namely
  $h_n=\pair<g_{n,x}>q_n$, with $q_n=q$ for all $n\ge0$. Define
  $\psi(h)=\pair<(\cdots\twoheadrightarrow g_{n,x}\twoheadrightarrow
  g_{n-1,x})_x>q$. This shows that $G(B)$ is self-similar.

  We next show that $B$ is a branch structure for $G(B)$. Projection
  on the last group $G_{-1}$ defines a homomorphism $G(B)\to B$, and
  $\psi(G(B))$ projects to $G_0\subseteq B\wr_X Q$.

  Suppose finally that $B$ is a branch structure for the self-similar
  branched group $G$ with self-similarity structure $\psi\colon G\to G\wr_X
  Q$. Define inductively maps $\iota_n\colon G\to G_n$ by
  $\iota_{-1}=f$ and $\iota_n(g)=\pair<\iota_{n-1}(g_x)>q$ if
  $\psi(g)=\pair<g_x>q$, for all $n\ge0$. Then
  $\phi_n\circ\iota_n=\iota_{n-1}$ for all $n\ge0$, so the maps
  $\iota_n$ assemble into a map $\iota\colon G\to\varprojlim G_n$.

  If $G$ is faithful, then $\bigcap_{n\ge0}\psi^{-n}(K^{X^n})=1$, so
  $\bigcap_{n\ge0}\ker(\iota_n)=1$ and $\iota$ is injective.
\end{proof}

Note then that $G(B)$ defines a topology on $G$, which is intermediate
between the congruence topology (in which neighbourhoods of the
identity are stabilizers of large subtrees) and the profinite
completion (in which every finite-index subgroup is a
neighbourhood). This topology is Hausdorff precisely when $G$ is
faithful. See~\cite{bartholdi-s-z:cspbg} for details on these
topologies.

\begin{prop}\label{prop:[K,K]<kernel}
  Let $G$ be a self-similar branched group over $K$, and let
  $\rho\colon G\to\GL_n(\C)$ be a linear representation of $G$. Then,
  for all $\ell\in\N$ large enough depending only on $n$, the kernel
  of $\rho$ contains $\psi^{-\ell}([K,K]^{X^\ell})$.
\end{prop}
\begin{proof}
  Assume $K\neq1$, otherwise there is nothing to show. The image of
  $\psi^{-\ell}(K^{X^\ell})$ in $\GL_n(\C)$ has bounded rank, so that
  there exists a constant $b$, depending only on $n$, with the
  following property: for all $\ell$ there exists a subset
  $\Omega\subseteq X^\ell$, with $\#(X^\ell\setminus\Omega)\le b$,
  such that $\ker\rho\cap\psi^{-\ell}(K^{X^\ell})$ maps onto
  $\psi^{-\ell}(K^\Omega)$. In particular, for $\ell\gg0$ one has
  $\Omega\neq\emptyset$, say $\omega\in\Omega$; then
  $[\ker\rho,\psi^{-\ell}(1\times\cdots\times
  K\times\cdots\times1)]=\psi^{-\ell}(1\times\cdots\times[K,K]\times\cdots\times1)$,
  with the non-trivial entry each times in position $\omega$. Since
  the action of $G$ on $X^n$ is transitive, we get
  $\psi^{-\ell}([K,K]^{X^\ell})\le\ker\rho$.
\end{proof}

\begin{cor}\label{cor:rigid}
  Let $G$ be a self-similar group, branched over $K$. Then $G$ is
  rigid if and only if $K/[K,K]$ is finite.
\end{cor}
\begin{proof}
  If $K/[K,K]$ is infinite, then it has infinitely many irreducible
  $1$-dimensional representations, so $G/[K,K]$ has infinitely many
  representations of degree at most $[G:K]$.

  Conversely, assume $K/[K,K]$ is finite, and consider $n\in\N$. By
  Proposition~\ref{prop:[K,K]<kernel}, there exists $\ell\in\N$ such
  that all $n$-dimensional representations of $G$ factor through
  $G/\psi^{-\ell}([K,K]^{X^\ell})$, which is finite; so there are
  finitely many $n$-dimensional representations.
\end{proof}

\begin{rem}\label{rem:[K,K]}
  In case $[K,K]$ contains $\psi^{-\ell}(K^{X^\ell})$ for some
  $\ell\in\N$, then the sharper statement holds that every linear
  representation has kernel containing $\psi^{-\ell}(K^{X^\ell})$ for
  some $\ell\in\N$.
\end{rem}

\begin{rem}\label{rem:zetaG(B)}
  If the self-similar group $G$ is branched over $K$, then it is also
  branched over $[K,K]$, so that there exists a branch structure with
  $B=G/[K,K]$ and with the additional property that every linear
  representation $\rho\colon G\to\GL_n(\C)$ factors through $G_\ell$
  for some $\ell$ large enough.

  Therefore, the representation zeta function of $G$ coincides with
  the zeta function counting all continuous representations of the
  profinite group $G(B)$.
\end{rem}

We now turn to the examples introduced in~\S\ref{ss:illustrations},
and describe their branch structures.

\subsection{The Grigorchuk group}\label{ss:branch:gg}
The maximal branching subgroup of the Grigorchuk group
(see~\S\ref{ss:gg}) is well-known; we recall it briefly.

In the Grigorchuk group $G$, consider the subgroup
$K=\langle[a,b]\rangle^G$. A direct computation shows that $K$ has
index $16$ in $G$, using the relations
$a^2=b^2=c^2=d^2=bcd=(ad)^4=1$. The computation
$\psi([[a,b],d])=\pair<1,[a,b]>$ shows that $\psi(K)$ contains
$K\times K$.

In the corresponding branch structure, one has $B=C_2\times D_8$.

Another direct computation shows that $[K,K]$ contains
$\psi^{-3}(K^{2^3})$, so that, by Remark~\ref{rem:[K,K]}, the
representations of $G$ and $G(B)$ are in bijection.

\subsection{The Gupta-Sidki groups}\label{ss:branch:gs}
The maximal branching subgroups of the Gupta-Sidki groups
(see~\S\ref{ss:gs}) are well-known; we recall them briefly.

In the Gupta-Sidki group $G_p$, consider the subgroup $K=[G_p,G_p]$. A
direct computation shows that $K$ has index $p^2$ in $G_p$. If
$p\ge5$, then the computation $\psi([t,t^a])=\pair<[a,t],1,\dots,1>$
shows that $\psi(K)$ contains $K^p$. For $p=3$, the computation is
slightly different: $\psi([tt^a,t^at^{a^2}])=\pair<[t^{-1},a^{-1}],1,1>$.

In the corresponding branch structure, one has $B=C_p\times C_p$.

Another direct computation shows that $[K,K]$ contains
$\psi^{-2}(K^{p^2})$, so that, by Remark~\ref{rem:[K,K]}, the
representations of $G$ and $G(B)$ are in bijection.

\subsection{Non-faithful actions}\label{ss:branch:nonfaithful}
If $G$ is a self-similar branched group, but is not faithful, it may
still be possible to construct a branch structure for it. Consider the
example of~\S\ref{ss:nonfaithful}: it is a group of the form
$G=H\wr_{\mathcal O} W$, for an abelian group $H$, a self-similar
branched group $W$ and an orbit $\mathcal O$ of $W$ on the boundary of
the tree $X^*$.

Let $(B,\phi)$ be a branch structure for $W$, with $B\wr_X Q\supseteq
B_+\overset\phi\twoheadrightarrow B$. Set $B'=H\times B$ and
$B'_+=H^X\times B_+\subseteq B'\wr_XQ$, and define $\phi'\colon B'_+\to B'$ by
\begin{equation}\label{eq:bsnonfaithful}
  \phi'\big((h_x)_{x\in X},b\big)=\Big(\prod_{x\in X}h_x,\phi(b)\Big).
\end{equation}
Then $(B',\phi')$ is a branch structure for $G$.

%%%%%%%%%%%%%%%%%%%%%%%%%%%%%%%%%%%%%%%%%%%%%%%%%%%%%%%%%%%%%%%%
\section{Proof of Theorem~\ref{thm:main}}\label{ss:proof}
The criterion ``$G$ is rigid if and only if $K/[K,K]$ is finite'' is
Corollary~\ref{cor:rigid}.

\subsection{Abscissa of convergence}
The next statement of the Theorem asserts that the abscissa of
convergence of $\zeta_G$ is finite and positive. The proof follows very
closely that in~\cite{bartholdi-h:zetawreath}, so I only describe its
main steps.

\begin{prop}[See~\cite{bartholdi-h:zetawreath}*{Proposition~13}]
  The abscissa of convergence of $\zeta_G$ is positive.
\end{prop}
\begin{proof}
  We let $r_n$ denote the number of irreducible degree-$n$
  representations of $K$.  As a first step, there are infinitely many
  irreducible representations of $K$, so that, for every $B\in\N$,
  there exists $n$ such that $\sum_{j\le n}r_j\ge B$.

  For every integer $\ell$, there are then at least $B^{d^\ell}$
  representations of $K^{X^\ell}$ of degree at most $n^{d^\ell}$.

  Induce and extend these representations to $G$, and
  apply~\eqref{eq:index}: the index of $\psi^{-\ell}(K^{X^\ell})$ in
  $G$ is $[K:\psi^{-1}(K^X)]^{(d^\ell-1)/(d-1)}[G:K]\le k^{d^\ell}$
  for some constant $k$, so there are at least $(B/k)^{d^\ell}$
  irreducible representations of $G$ of degree at most
  $(nk)^{d^\ell}$. Choosing any $B>k$ gives the desired inequality
  $\sigma_0\ge\log(B/k)/\log(nk)$.
\end{proof}

\begin{prop}[See~\cite{bartholdi-h:zetawreath}*{Proposition~12}]
  The abscissa of convergence of $\zeta_G$ is finite.
\end{prop}
\begin{proof}
  Since the proof follows
  closely~\cite{bartholdi-h:zetawreath}*{Proposition~12}, let me only
  sketch the proof. Furthermore, the finiteness of the abscissa of
  convergence also implicitly follows from the functional equation.

  Let $r_n$ denote the number of irreducible, $n$-dimensional complex
  representations of $K$. We claim that there exist constants
  $A\in\N$ and $t>1$ such that
  \begin{equation}\label{eq:finite:ansatz}
    r_n\le \overline{r_n}:=A (n/\sigma_0(n))^t,
  \end{equation}
  with $\sigma_0(n)$ denoting the number of divisors of $n$.

  Up to replacing $X$ by $X^\ell$ for some $\ell\in\N$, we may assume
  that $K$ acts non-trivially on $X$. Indeed $G$ acts transitively on
  $X^\ell$, so since $K$ has finite index in $G$ it acts with
  boundedly many orbits.

  From Proposition~\ref{prop:[K,K]<kernel}, all representations of
  $K$ are representations of $K/\psi^{-\ell}([K,K]^{X^\ell})$ for some
  $\ell\in\N$. Let us denote by $r_{n,\ell}$ the number of those
  representations of $K$ that factor through
  $K/\psi^{-\ell}([K,K]^{X^\ell})$. We have $r_n=\sup_\ell
  r_{n,\ell}$, and $r_{n,0}=0$ for all $n\ge2$ while
  $r_{n,0}=[K:[K,K]]$. We prove by induction on $\ell$
  that~\eqref{eq:finite:ansatz} holds for all $\ell\in\N$.

  To compute $r_{n,\ell+1}$ in terms of $r_{m,\ell}$ for all $m|n$, we
  apply Theorem~\ref{thm:clifford}. We tensor $d$ representations of
  $K$ to obtain a representation of $K^X$, extend it to its inertia
  subgroup $I\le K$, and induce it to a representation of
  $K$. Therefore
  \[r_{n,\ell+1}\le\sum_{\substack{\psi^{-1}(K^X)\le I\le
      K\\n=n_1\cdots n_d e[K:I]}}r_{n_1}\cdots r_{n_d} N_e,
  \]
  with $N_e$ denoting the number of $e$-dimensional projective
  representations of $I/\psi^{-1}(K^X)$. We consider only $n\ge3$. The
  summands with $e[K:I]\ge2$ are easily controlled by a bound of the
  form $\overline{r_n}/2$, if $t$ is large enough (independently of
  $\ell$). Consider then summands with $K=I$ and $e=1$. The $d$-tuple
  of representations of $K$ we are inducing must then be constant on
  $K$-orbits, and since these orbits are non-trivial, there are
  repetitions in the $K$-tuple, diminishing the number of factors
  $r_{n_1}\cdots r_{n_d}$; so this term may again by bounded by
  $\overline{r_n}/2$, if $t$ is large enough (independently of
  $\ell$).
\end{proof}

\subsection{Functional equation}
We fix a branched group $G$, a branch structure $(B,\phi)$, and an
epimorphism $f\colon G\to B$. Let $K=\ker(f)$ denote the branching
subgroup. Up to replacing $K$ by $[K,K]$ if needed, we assume by
Proposition~\ref{prop:[K,K]<kernel} that every representation of $G$
factors through $G/\psi^{-\ell}(K^{X^\ell})$ for some $\ell\in\N$.

Let $\mathscr T$ denote a complete set of equivalence class
representatives of $B$-representation triples. Recall that $\mathscr
T$ is finite, being the disjoint union of the second cohomology groups
of all subgroups of $B$. For $\Theta$ a $B$-representation triple,
we denote by $[\Theta]$ its representative in $\mathscr T$.

Without loss of generality, we assume that whenever
$\Theta,\Theta'\in\mathscr T$ are representation triples such that
$\img(\Theta)$ and $\img(\Theta')$ are conjugate in $B$, say by $b\in
B$, then $\src(\Theta)=\src(\Theta')$ and $("\text{conjugation by
}b")\circ f(\Theta)=f(\Theta')$.

In order to compute the zeta function $\zeta_G(s)$, we introduce
Dirichlet series
\[\zeta_{G,\Theta}(s)=\sum_{\substack{\rho\in\widehat K\\ [(\rho,f)]=\Theta}}(\deg\rho)^{-s}.\]
Then, by Proposition~\ref{prop:cliffordtriple}, these series can then
be assembled into $\zeta_G$ as follows:
\begin{equation}\label{eq:fe:1}
  \zeta_G(s)=\sum_{\Theta=(\rho,f)\in\mathscr T}\zeta_{G,\Theta}(s)\zeta_{\Theta^\vee}(s)[B:\img(\Theta)]^{-1-s}.
\end{equation}

In fact, the functional equation we derive will have the following
form, equivalent to~\eqref{thm:main:fe0}, for polynomials $F_\Theta$
to be defined in~\eqref{eq:formula}:
\[\text{Equation }\eqref{eq:fe:1},\text{ and }
\zeta_{G,\Theta}(s)=F_\Theta(\{\zeta_{G,\Theta'}(s),\dots,\zeta_{G,\Theta'}(ds)\}_{\Theta'\in\mathscr T})\text{ for all }\Theta\in\mathscr T.
\]

For greater clarity, we consider
\[G_+=\psi(G)=\{\pair<g_x>q\in G\wr_X Q\mid\pair<f(g_x)>q\in B_+\},
\]
and produce a functional equation relating the zeta functions of $G$
and $G_+$. Since $G$ and $G_+$ are isomorphic (via $\psi$), we will be
done.

In a different language, we know from Remark~\ref{rem:zetaG(B)} that
the zeta functions of $G$ and of the profinite group $G(B)$ coincide,
and $G(B)=\varprojlim G_n$. The zeta function of $G$ is the
co\"efficient-wise limit of the zeta functions of the finite groups
$G_n$, and the functional equation~\eqref{eq:formula} may also be
interpreted as a functional equation between the zeta functions of
$G_n$ and $G_{n+1}$, with $G_n$ taking the role of $G$ and $G_{n+1}$
taking the role of $G_+$. Starting from $G_{-1}=B$, we obtain by
iteration and taking a limit the zeta function of $G(B)$.

For brevity of notation, we consider the free module $\Omega$ with
base $\mathscr T$ over the ring of Dirichlet series, and its element
\[\zeta_{G,\mathscr T}:=\sum_{\Theta\in\mathscr T}\zeta_{G,\Theta}\cdot\Theta.\]
An equation in $\Omega$ is a convenient way of writing $\#\mathscr T$
equations among zeta functions.

Theorem~\ref{thm:clifford} asserts that all representations of $K$ may
be obtained by running through all choices of $\rho_x$, extending
$\bigotimes_{x\in X}\rho_x$ to its inertia in $K\le G_+$, tensoring by
a projective representation, and inducing to $K$. We show that the
equivalence class of the obtained representation triple depends only
on the equivalence classes of the representation triples $(\rho_x,f)$
and the datum of which $\rho_x$ are equivalent:
\begin{prop}\label{prop:indep}
  Let $(\rho_x)_{x\in X}$ be a collection of irreducible
  representations of $K$, with associated representation triples
  $\Theta_x:=[(\rho_x,f_x)]\in\mathscr T$. Write
  \[\zeta(s)=\sum_{\sigma\in\widehat K\text{ extending
    }\bigotimes_x\rho_x}\dim(\sigma)^{-s}[(\sigma,f)]\in\Omega.\] Then
  $\zeta(s)\prod_{x\in X}\dim(\rho_x)^s$ depends only on the
  $\Theta_x$ and on the relation $\{(x,y):\rho_x\sim\rho_y\}\subseteq
  X^2$.
\end{prop}
\begin{proof}
  The inertia of $\bigotimes_x\rho_x$ in $G\wr_X Q$ has the form
  $(\prod_x G_{\rho_x})^{\tilde c}\rtimes P$, for some $\tilde c\in
  G^X$ and the subgroup $P\le Q$ consisting of all $q\in Q$ such that
  $\rho_x\sim\rho_{qx}$ for all $x\in X$. It is also the preimage by
  $f_1\colon G\wr_X Q\to B\wr_X Q$ of $H=(\prod_x\img(\Theta_x)\rtimes
  P)^c$ for some $c\in B^X$, and is therefore determined by the
  character triples $\Theta_x$ and the relation
  $\{(x,y):\rho_x\sim\rho_y\}$.

  Define then $I=\prod_x\src(\Theta_x)\rtimes P$, and $f_+\colon I\to
  B\wr_X Q$ by $f_+(\pair<g_x>p)=(\pair<f_x(g_x)>p)^c$. On
  $N:=\ker(f_+)=\prod_x\ker(f_x)$, define the representation
  $\rho_+=\bigotimes_x\rho_x$. Then $(\rho_+,f_+)$ is a $(B\wr_X
  Q)$-representation triple.

  Write $I_+=f_+^{-1}(B_+)$, and denote still by $f_+$ the restriction
  of $f_+$ to $I_+$. We obtain a $B_+$-representation triple
  $(\rho_+,f_+)$. Set $N_+=f_+^{-1}(\ker\phi)$. Let $\sigma$ run over
  all the extensions of $\rho_+$ to $N_+$, and note that $\sigma$'s
  inertia still lies in $I_+$, by Lemma~\ref{lem:extend}.

  Note that the representation $\rho_+$ was extended from $N$ to
  $N_+$; this extension degree is therefore expressible as
  $\dim(\sigma)/\dim(\rho_+)$.

  Consider then the induced representation triple $(\sigma,\phi\circ
  f_+)$. The induction degree is $[\ker\phi:\ker\phi\cap\img(f_+)]$.

  This recipe is based on Theorem~\ref{thm:clifford}, and follows
  Proposition~\ref{prop:cliffordtriple} producing all representations
  of $K$ out of representations of its normal subgroup
  $\psi^{-1}(K^X)$. The equivalence class of the representation triple
  $(\sigma,\phi\circ f_+)$ depends only on the classes $\Theta_x$ and
  on the choice of subgroup $H$, which in turn was dictated by the
  relation $\{(x,y):\rho_x\sim\rho_y\}$. Furthermore, the extension
  and induction degrees are determined by character triples as
  required.
\end{proof}

We are now ready to construct the functional equation expressing
$\zeta_{G_+,\mathscr T}$ in terms of $\zeta_{G,\mathscr T}$. We follow
Proposition~\ref{prop:indep} in writing $\zeta_{G_+,\mathscr T}$ as a
sum, over all $d$-tuples of character triples $(\Theta_x)_{x\in X}$,
of all representations of $K\le G_+$ whose restriction to $K^X$ is a
multiple of $\bigotimes_x\rho_x$ for representations $\rho_x$ of $K$
with $[(\rho_x,f)]=\Theta_x$ for all $x\in X$.

Once a family $(\Theta_x)_{x\in X}\in\mathscr T^X$ of
$B$-representation triples has been fixed, we sum over all possible
inertias of the corresponding tensor product of representations. Since
the inertia contains $K^X$, it suffices to consider its image in
$B\wr_X Q$. We are therefore led to enumerate all subgroups $H\le
B\wr_X Q$ satisfying the following two properties: $H\cap
B^X=\prod_x\img(\Theta_x)$; and, denoting by $P\le Q$ the image of $H$
in $Q$, the family $(\Theta_x)_{x\in X}$ is constant on
$P$-orbits. The first condition implies that abstractly
$H\cong\prod_x\img(\Theta_x)\rtimes P$, and in fact
$H=(\prod_x\img(\Theta_x)\rtimes P)^c$ for some $c\in B^X$.

We then consider all representations induced and extended from all
irreducible representations $\bigotimes_x\rho_x$ of $K^X$ such that
$[(\rho_x,f)]=\Theta_x$ and $\rho_x\sim\rho_y$ if and only if $x\in P
y$. For a $P$-orbit $Y$, we write $\Theta_Y:=\Theta_y$ for any $y\in Y$.

To conclude the enumeration, observe that the subgroups $H$ as above
form a lattice, under \emph{reverse} inclusion, so that the lattice's
maximal element is $\prod_x\img(\Theta_x)$. Let $\mu$ denote the
lattice's M\"obius function~\cite{rota:fct1}; so
$\sum_{H\ge H'\ge H''}\mu(H,H')=\delta_{H,H''}$.

It is convenient to replace the condition ``if and only if
$x\in P y$'' by ``if $x\in P y$'', and apply inclusion-exclusion on
the lattice of subgroups $H$.  Indeed, then, the contribution to
$\zeta_{G_+,\mathscr T}(s)$ is
$\zeta(s)\prod_x\dim\rho(\Theta_x)^s\prod_{\text{$P$-orbits
    $Y$}}\zeta_{G,\Theta_Y}(s)$ with $\zeta(s)$ as in
Proposition~\ref{prop:indep}.

\noindent We have arrived at the following formula expressing
$\zeta_{G_+,\mathscr T}$ in terms of $\zeta_{G,\mathscr T}$; recall
the notation $\Theta_+=(\rho_+,f_+)$ from the proof of
Proposition~\ref{prop:indep}:
\begin{multline}\label{eq:formula}
  \zeta_{G_+,\mathscr T}(s)=\sum_{(\Theta_x)\in\mathscr
    T^X}\sum_{\prod_x\img(\Theta_x)\le H\le B\wr_X Q}\sum_{\substack{\sigma\text{ induced from }(\rho_+,f_+)\\(\rho_+,f_+)\text{ extending }\bigotimes_x\rho(\Theta_+)}}\\
  [\ker\phi:\ker\phi\cap\img(f_+)]^{-1-s}\left(\frac{\dim(\sigma)}{\dim(\rho_+)}\right)^{-s}\times\\
  \times\sum_{\prod_x\img(\Theta_x)\le H'\le H}\mu(H,H') \prod_{Y\text{ orbit of $H'$ on $X$}}\zeta_{G,\Theta_Y}(\#Y s)\cdot[(\sigma,\phi\circ f_+)].
\end{multline}
This equation takes place in the module $\Omega$; by writing it in the
basis $\mathscr T$, one obtains equations
$\zeta_{G_+,\Theta}=F_\Theta(\{\zeta_{G,\Theta'}:\Theta'\in\mathscr
T\})$
for polynomials $F_\Theta$ with co\"efficients in
$\Q(2^{-s},\dots,P^{-s})$ for $P=\#B_+$, as required.

\subsection{Singularities}
We recall some arguments from~\cite{bartholdi-h:zetawreath}.  Let as
usual $\sigma_0$ denote the abscissa of convergence of $\zeta_G$; it
is the maximum of the absciss\ae\ of convergence of $\zeta_{G,\Theta}$
for all $\Theta\in\mathscr T$, since all $\zeta_{G,\Theta}$ are
positive-co\"efficient power series counting subsets of the
representations counted by $\zeta_G$, and combining to $\zeta_G$
by~\eqref{eq:fe:1}. Let $\mathscr H(k)$ denote the ring of holomorphic
functions in $\{\Re(s)>2^{-k}\sigma_0\}$. Observe then that
$\zeta_{G,\Theta}$ converges in $\mathscr H(0)$ for all
$\Theta\in\mathscr T$, and that
\[\mathscr H(0)\subset\mathscr H(1)\subset\cdots\subset\bigcup_{k\ge0}\mathscr T(k)=\{f\colon\{\Re(s)>0\}\to\C\}.
\]
Treating all variables $\zeta_{G,\Theta}(ks)$ with $k\ge2$ as co\"efficients,
the functional equation~\eqref{eq:formula} may be viewed as a
polynomial equation system in unknowns $\zeta_{G,\Theta}(s)$ and
co\"efficients in $\mathscr H(1)$. As such, it defines the
$\zeta_{G,\Theta}(s)$ as algebraic functions, in a finite extension of
$\mathscr H(1)$. More generally, let $\overline{\mathscr H(k)}$ denote
an algebraic closure of $\mathscr H(k)$; then, for every $k\ge1$, the
functional equation~\eqref{eq:formula} may be viewed as a polynomial
equation system in unknowns $\zeta_{G,\Theta}(s)$ and co\"efficients
in $\overline{\mathscr H(k)}$, hence describing
$\zeta_{G,\Theta}(s)\in\overline{\mathscr H(k)}$.

It remains to check that the leading co\"efficients in the functional
equation never vanish. To see that, consider a monomial
$S=\zeta_{G,\Theta_1}(s)\cdots\zeta_{G,\Theta_d}(s)$ in a term
of~\eqref{eq:formula}. It is associated with representations that
extend/induce from $\rho_1\otimes\cdots\otimes\rho_d$ whose inertia is
precisely $\prod_x G_{\rho_x}$, namely for which the group $P$ as
above is trivial. There is therefore no inclusion-exclusion, and the
co\"efficient of $S$ in the functional equation is the Dirichlet
polynomial counting representations of $(\prod_x G_{\rho_x}\cap
G_+)/K^X$ with given cocycle; in particular, this co\"efficient is
holomorphic in $\{\Re(s)>0\}$, and bounded away from $0$.

We have therefore shown that all the singularities in $\{\Re(s)>0\}$
of $\zeta_{G,\Theta}$ are algebraic; since an algebraic closure of the
ring of holomorphic functions may be taken as the ring of convergent
Puiseux series (see e.g.~\cite{eisenbud:ca2ag}*{Corollary~13.15}), we
have power series expansions in $s^{1/e}$ about all $s\in\C$ with
$\Re(s)>0$, and in particular in $\sigma_0$. The root order $e$ at
$\sigma_0$ is at least $2$, because $\sigma_0$ is a singularity of
$\zeta_G$, and it bounded by the degree $d$ of the functional
equation.

\subsection{Layered groups}
In the special case that $G\cong G\wr_X Q$, we recover Theorem~3
from~\cite{bartholdi-h:zetawreath} as follows: $B=1$, and there is a
single representation triple. The subgroups $H$ are then in bijection
with subgroups of $Q$. Theorems~1 and~3
in~\cite{bartholdi-h:zetawreath} were in fact written in terms of the
lattice of partitions of $X$; however, if two subgroups $Q,Q'$ induce
the same orbit partition on $X$, then these subgroups contribute many
times to~\eqref{eq:formula}, but that multiplicity is compensated by
the M\"obius function. Since the cohomology classes in question are
all trivial, the summation on all $\sigma$ may in fact be written via
the zeta function of $Q$.

%%%%%%%%%%%%%%%%%%%%%%%%%%%%%%%%%%%%%%%%%%%%%%%%%%%%%%%%%%%%%%%%
\section{Implementation details}
The proof given in~\S\ref{ss:proof} is constructive enough that it can
be implemented easily in a computer algebra system such as
\textsc{Gap}~\cite{gap4:manual}. The code is freely available, and is
part of my package \textsc{Fr} designed to manipulate self-similar
groups. Some changes to the method given in~\S\ref{ss:proof} made the
computation more efficient.

\subsection{Representation triples}
Representation triples are objects consisting of a linear
representation and a homomorphism. Cohomology classes in
$H^2(G,\C^\times)$ are represented as $2$-cocycles, namely, as lists
of cyclotomic numbers indexed by $G\times G$.

A function computes the cocycle of a representation triple.

Another function converts a representation triple to an equivalent one in
which the marking is a Schur covering map.

More precisely, this function finds, given a representation triple
$\Theta$ and a list $\mathscr T$ of representation triples, the one
from the list that is equivalent to $\Theta$.

A function computes all the $B$-representation triples up to
equivalence. This is done by enumerating subgroups of $B$; computing
their Schur cover; and for each subgroup enumerating the characters of
the kernel of its Schur covering map.

A function computes all projective representations of a group with
given cocycle; the group and cocycle are respectively given to the
function as image and representation of a representation triple.

A function, given a projective representation $\rho$ of $G$ that is
equivalent to a linear one and an epimorphism $f\colon G\twoheadrightarrow
B$ such that the restriction of $\rho$ to $\ker(f)$ is linear,
computes all linear representations of $G$ that are equivalent to
$\rho$. These are in bijection with $H^1(B,\C^\times)$.

Finally, a function computes, given a linear representation $\rho$ of
$H$ and a group $G\ge H$, all irreducible representations of $G$ that
extend $\rho$.

\subsection{Constructing the functional equation}
The parameters stated in Theorem~\ref{thm:main} are $N=\#\mathscr T$
and $P=\#B$. In particular, the partial zeta functions $\zeta_i(s)$
are really $\zeta_{G,\Theta}(s)$, and the homogeneous polynomials
$F_i$ are really $F_\Theta$.

It is too costly to enumerate all subgroups $H$ as
in~\S\ref{ss:proof}. Rather, given the triples $(\Theta_x)_{x\in X}$,
we first compute all admissible partitions of $X$, namely those
$\mathscr P=(Y_1,\dots,Y_k)$ such that if $x,y$ are in the same part
then $\Theta_x=\Theta_y$. We denote by $Q_{\mathscr P}$ the
stabilizer of $\mathscr P$ in $Q$. We then define subsets
$\mathscr C_x$ of $B$, for every $x\in X$, as follows. For each part
$Y_i$, we choose a representative $x_i$; we let $\mathscr C_{x_i}$ be
a right transversal of the normalizer of $\img(\Theta_{x_i})$ in
$B$. For the other $x\in Y_i$, we let $\mathscr C_x$ be a right
transversal of $\img(\Theta_x)$ in $B$.

The corresponding subgroup $H$ of $B_+$ is
$(\prod_x\img(\Theta_x)\rtimes Q_{\mathscr P})^c$ for an arbitrary
choice of $c\in\prod_x\mathscr C_x$. We do not construct $H$
explicitly, but rather let $\mathscr I$, the ``possible inertias'', be
the list, for all choices of a partition $\mathscr P$ and
$c\in\prod_x\mathscr C_x$, of the homomorphism $f$ from
$I=\prod_x\src(\Theta_x)\rtimes Q_{\mathscr P}$ to $B\wr_X Q$ given
by $(\prod_x f(\Theta_x))\times id$ followed by conjugation by $c$.

We then construct a $\mathscr I\times\mathscr I$-matrix $\iota$, with
$\iota(f,f')=1$ if $\img(f)\le\img(f')$ and $\iota(f,f')=0$
otherwise. The M\"obius function of $\mathscr I$ is just the matrix
inverse of $\iota$.

Now, for every $f\in\mathscr I$, we compute the extensions $\sigma$ of
$\prod_x\rho(\Theta_x)$ to $f^{-1}(\ker\phi)$; and keep track of the
extension degree $e$ and the induction degree $i$, as well as the
representative of $\Theta'=(\sigma,\phi\circ f)$ in $\mathscr
T$. Summing over all $f'\in\mathscr I$ the expression
$\mu(f,f')e^{-s}i^{-1-s}$, we have just computed a term of
$F_{\Theta'}$. We repeat this for all tuples $(\Theta_x)_{x\in
  X}\in\mathscr T^X$.

\subsection{Using the functional equation}\label{ss:using}
To compute the co\"efficient of $n^{-s}$ in $\zeta_G$, it is
sufficient to work with Dirichlet series truncated at degree $n$. One
starts with the Dirichlet series $\zeta_{B,\mathscr T}$, which can
easily be computed because $B$ is a finite group, and iterates the
functional equation to obtain a fixed point. The iteration converges
because the polynomials $F_i-z_{i,1}$ are homogeneous of degree at
least two. This is how high-degree co\"efficients were computed.

On the other hand, to continue $\zeta_G$ analytically, one starts by
computing a large number of terms of $\zeta_{G,\mathscr T}$ as above,
up to, say, degree $n=10^{10}$, obtaining a Dirichlet polynomial. For
$s\in\C$ with sufficiently large real part, $\zeta_G(s)$ is well
approximated by the Dirichlet polynomials of $\zeta_{G,\mathscr T}$
and~\eqref{eq:fe:1}. For smaller values of $s$, one goes through the
functional equation~\eqref{eq:formula}, and replaces
$\zeta_{G,\Theta}(ks)$, whenever $k\ge2$, by its value using the
Dirichlet polynomial. What remains is a sequence of $\#\mathscr T$
polynomials with complex co\"efficients and in variables
$\{\zeta_{G,\Theta}(s)\}_{\Theta\in\mathscr T}$. Such a system can be
solved numerically, e.g.\ using PHC~\cite{verschelde:795} or the more
recent \textsc{Bertini}~\cite{bates+:bertini}. The system usually has
more than one solution, and one picks the relevant one; in particular,
for real $s$, one picks (following analytic continuation) the solution
in $\C^{\mathscr T}$ that is closest to the one computed for a
neighbouring $s$.

Finally, to obtain the abscissa of convergence, one restricts oneself
to real $s$; and finds, by repeated subdivision, the minimal $s$ such
that the solutions returned by numerically solving for
$\zeta_{G,\Theta}(s)$ remain all real. By the Landau-Phragm\'en
theorem mentioned in the Introduction, the abscissa of convergence is
a number $\sigma_0$ such that all $\zeta_{G,\Theta}(k\sigma_0)$ may be
accurately computed using the Dirichlet polynomial truncation, while
the polynomial system derived from the functional equation has a
multiple root at $\sigma_0$.

\begin{bibsection}
\begin{biblist}
\bibselect{math}
\end{biblist}
\end{bibsection}
\end{document}